\renewcommand{\theta}{\varphi}
\renewcommand{\theta}{\vartheta}
\renewcommand{\epsilon}{\varepsilon}
\newcommand{\R}{\mathbb{R}}
\newcommand{\Z}{\mathbb{Z}}
\newcommand{\B}{\mathbb{B}}
\newcommand{\I}{\mathbb{I}}
\newcommand{\CC}{\mathcal{C}}
\newcommand{\BB}{\mathcal{B}}
\newcommand{\HH}{\mathcal{H}}
\newcommand{\UU}{\mathcal{U}}
\newcommand{\Null}{\mathcal{N}}
\newcommand{\As}{\mathscr{A}}
\newcommand{\Us}{\mathscr{U}}
\newcommand{\Es}{\mathscr{E}}
\newcommand{\Hs}{\mathscr{H}}
\newcommand{\Bs}{\mathscr{B}}
\newcommand{\Ws}{\mathscr{W}}
\DeclareMathOperator{\rot}{rot}
\DeclareMathOperator{\inter}{int}
\DeclareMathOperator{\ind}{ind}
\DeclareMathOperator{\conv}{conv}
\newcommand{\mydeg}{\mathfrak{D}}
\newcommand{\inn}[1]{#1_\mathrm{in}}
\newcommand{\out}[1]{#1_\mathrm{out}}
\providecommand{\abs}[1]{\left\lvert#1\right\rvert}     
\newcommand{\scal}[2]{\left\langle #1,#2\right\rangle}
\newcommand{\clos}[1]{\overline{#1}}
\newtheorem{theorem}{Theorem}
\newtheorem{lemma}[theorem]{Lemma}
\newtheorem{prop}[theorem]{Proposition}
\newtheorem{corol}[theorem]{Corollary}
\theoremstyle{definition}
\newtheorem{defin}[theorem]{Definition}
\newtheorem{remark}[theorem]{Remark}
\newtheorem{example}[theorem]{Example}
\numberwithin{equation}{section}
\numberwithin{theorem}{section}
\title{A topological degree theory for rotating solutions of planar systems}
\author{\textsc{Paolo Gidoni}\\
	\small
	\textit{Dipartimento Politecnico di Ingegneria e Architettura, Università degli Studi di Udine}\\\small
	\textit{Via delle Scienze 206, 33100 Udine, Italy.} \texttt{paolo.gidoni@uniud.it}
}
\date{}
\begin{document}

	\maketitle
	
\begin{abstract}
	We present a generalized notion of degree for rotating solutions of planar systems. We prove a formula for the relation of such degree with the classical use of Brouwer's degree and obtain a twist theorem for the existence of periodic solutions, which is complementary to the Poincaré--Birkhoff Theorem. Some applications to asymptotically linear and superlinear differential equations are discussed.
\end{abstract}	
{\small
	{ \bf Keywords:} periodic solutions, Brouwer's degree, planar systems, rotation number.\\
	{ \bf MSC 2020:} primary 37C25, secondary 34C25, 37E45, 37J46.}

\section{Introduction}

In this paper we develop a topological degree theory characterizing the rotational properties of evolution maps on the plane, formalizing and improving the strategy introduced with A. Margheri in \cite{GidMar}, with applications to the existence and multiplicity of periodic solutions for ordinary differential equations.

The relevance of the rotational features of a systems  in the search for periodic solutions has been well-known for a long time. Several specialized methods exploiting such properties have been developed, among which two main clusters can be identified. The first one is epitomised by the celebrated Poincaré--Birkhoff Theorem (see \cite{Reb} for an historical introduction).
The theorem shows the existence of two fixed-points for area-preserving maps on the annulus, satisfying a twist condition of the boundary. This result is particularly suited to the study of periodic solutions of periodic Hamiltonian flows, for which a generalization in higher dimension has been recently proposed by A.~Fonda and A. Ureña \cite{FonUre,FonUre2,FonGid1}. Yet, the area-preserving assumption is quite restrictive, albeit crucial for the existence of fixed-points. An attempt to circumvent this situation is given by the theory of \emph{bend-twist maps} \cite{DingT,PasZan}, where area-preservation is replaced by some structural assumption on the map along some curves in the interior of the annulus, producing a behaviour analogous to the one in the classical case. 

A second family of results concerns instead planar systems whose solutions starting from the boundary of a (topological) disk rotate in a suitable way, so that we can find a periodic solution starting inside the disk. Typically, this type of results is applied to systems which have a superlinear (or sublinear, or linear and non-resonant) behaviour at infinity.
In this sense, we mention the results in \cite{Fucik,Stru}, which use a fixed-point theorem based on Brouwer's degree, and in \cite{CaMaZa,Maw}, proposing a continuation technique of Leray--Schauder’s type. Although  classical applications of degree methods do not present any constraint such as area preservation, resulting in a wider field of applicability, they also do not directly provide any information on the rotation number, and therefore have limited use in multiplicity results (with some notable exceptions for area-preserving maps, cf.~\cite{MaReZa}).

With this work, we propose to incorporate the best aspects of both lines of investigation, combining Brouwer's degree  with the qualitative properties of the evolution map lifted in polar coordinates.
Denoting with $f_T(x)=\phi_T(x)-x$ the Poincaré map $\phi_T$ of the dynamics minus the identity, the classical approach is to consider  Brouwer's degree $\deg(f_T,U,0)$ on a suitable open set $U$. Here instead, assuming that the rotation of each solution starting from a point on the boundary $\partial U$ is well defined, we introduce a countable family of degrees $\mydeg(F_T,U,\nu_i)$, each associated with a specific number of rotations $i\in\Z$. The novel degree $\mydeg$ inherits some usual properties of a degree, such as additivity and invariance for admissible homotopies, and a non-zero degree $\mydeg$ implies the existence of a periodic solution with the corresponding rotation number.
 In Theorem \ref{th:degree}, we show that Brouwer's degree $\deg(f_T,U,0)$ is determined by the sum of all the degrees $\mydeg$. Hence, even when $\deg(f_T,U,0)=0$, some of the degrees $\mydeg$ might be non-zero; some examples can be found in Section~\ref{sec:appl}.

As illustrated by Theorems \ref{th:deg1}, results of the ``disk''-type described above can be embedded in the theory, and the role of rotation in such situation appears highlighted.
Still, our theory performs best when dealing with twist on an annulus $A=\out U\setminus \clos{\inn U}$. Our main results in such framework is Theorem \ref{th:main}, providing the existence of one or, in some cases, two periodic solutions with different rotation number. Comparing it to the Poincaré--Birkoff theorem, there are several differences, and, in a sense, the two results may be seen as complementing each other. 
Firstly, since our approach is degree-based, we do not required any structural assumption on the annulus (e.g., star-shaped or invariant boundary, as necessary in various versions of the Poincaré--Birkhoff Theorem), nor the area-preserving assumption, nor any replacement assumption in the interior as for bend-twist maps. 

Also the twist condition is different. In the Poincaré--Birkhoff Theorem, for some integer $n$, all solutions starting from the internal boundary $\partial \inn U$ must do less than $n$ rotations, and all solutions starting from the external boundary $\partial \out U$ must do more than $n$ rotations (or vice versa). The theorem then provides two periodic solutions making exactly $n$ rotations around the origin (cf.~Theorem \ref{th:PB}). Our twist condition \eqref{eq:twist} in Theorem \ref{th:main} instead allows an overlapping of the rotation ranges of the solutions starting respectively from the internal and from the external boundary, provided that no integer number is included in such region of overlapping.
However, an additional degree condition is necessary.  For each boundary of the annulus, if
the Brouwer's degree of $f_T(x)$ on $\inn U$ (resp.~on $\out U$) is different from one, then a periodic solution starting in the annulus is obtained; such solution has rotational properties similar to those of the corresponding boundary $\partial \inn U$ (resp.~$\partial\out U$). In particular, if $f_T$ has degree different from one on both $\inn U$ and $\out U$, two fixed points with different rotation properties are obtained.  We emphasize that \emph{Brouwer's degree on $\inn U$ and $\out U$ might also be the same} (provided it is not equal to one), implying a null degree on the annulus. This means that our method not only characterizes rotations, but is successful in situations where a standard application of Brouwer's degree would be totally ineffective. This situation is illustrated in Example \ref{ex:planar}.

The need of such an additional requirement is not surprising. Indeed, twist alone, in a non-Hamiltonian setting, is not sufficient to assure the existence of nontrivial periodic solutions in an annulus. A very wide family of examples is given by dissipative and expansive flows, with the origin as unique fixed point: in such situations, the degree of $f_T$ on an open set containing the origin is always equal to one.  An example of this is presented at the end of Section \ref{sec:fixedpoint}.  

Nevertheless, we notice that twist alone, without the additional degree condition, is sufficient to find a fixed-point of $\phi_T(x)$ in the whole disk $\out U$, but not necessarily in the annulus. This fact was already observed by W.Y.~Ding \cite{Ding} for a twist condition of Poincaré--Birkhoff-type; we prove it in Theorem \ref{th:ding} for our weaker twist condition.

The complementarity, in the area-preserving case, of Theorem \ref{th:main} and the Poin\-caré--Birkhoff Theorem is well illustrated by the case of planar Hamiltonian systems asymptotically linear at zero and at infinity, which we discuss in Section \ref{sec:asymp_lin}. In such a case, the two theorems provide the existence of periodic solutions with different rotation numbers.  More precisely, it is shown that, if the asymptotic linear systems are nonresonant with Maslov's indices respectively $i_0$ and $i_\infty$, then there exist at least $\abs{i_\infty-i_0}$ nontrivial $T$-periodic solutions, a fact conjectured by C.C. Conley and E. Zehnder in \cite{ConZeh}. This result was first proved in \cite{GidMar}, using a simplified version of the degree discussed here; we propose with Theorem \ref{th:gidmar} a shorter proof based on the twist Theorem \ref{th:main}.

The paper is structured as follows. In Section \ref{sec:defin}, we introduce the novel notion of degree $\mydeg$ and state some of its basic properties. Notice that the theory presented here holds for general open bounded sets, while the construction in \cite{GidMar} considered only balls centred in the origin. This improvement has not only a theoretical value, but also allows a wider range of applications, for instance the case of superlinear ODEs discussed in Section~\ref{sec:superlinear}.

In Section \ref{sec:comp} we prove the first main result of the paper, Theorem~\ref{th:degree}, consisting of a characterization of the degree $\mydeg$ in terms of rotational properties and Brouwer's degree of $f_T$. This provides a fast way to compute, or at least estimate, the degree $\mydeg$, making it much easier to employ in concrete problems. 

In Section \ref{sec:fixedpoint} we present some fixed-point theorems which follow from Theorem~\ref{th:degree}, including the twist Theorem \ref{th:main} discussed above. In Section \ref{sec:appl} we propose several applications of the theory to the existence and multiplicity of periodic solutions for asymptotically linear and superlinear systems.

%%%%%%%%%%%%%%%%%%%%%%%%%%%%%%%%%%%%%%%%%%%%%%%%%%%%%%%%%%%%%%%%%%%%%%%%%%%%%%%%%%%%%%%

%%%%%%%%%%%%%%%%%%%%%%%%%%%%%%%%%%%%%%%%%%%%%%%%%%%%%%%%%%%%%%%%%%%%%%%%%%%%%%%%%%%%%%

\section{A new notion of degree for evolution maps in polar-like coordinates}\label{sec:defin}

\subsection{Polar-like coordinates and annuli} 
The first step, in order to study how solutions rotate, is to introduce suitable polar coordinates.
For our aims, we consider the following generalized notion of polar coordinates. Let us denote with $\Hs$ the open half-plane $\Hs:=\R\times (0,+\infty)$, and, accordingly, with $\clos \Hs$ its closure $\clos \Hs:=\R\times [0,+\infty)$. The origin of the plane is denoted as $0_{\R^2}$, but where there is no risk of ambiguity we will use a plain~$0$.

\begin{defin}[Polar-like coordinates] We say that a map $\Psi\colon \clos\Hs \to \R^2$ defines polar-like coordinates on $\R^2$ if it satisfies:
	\begin{itemize}
		\item (\emph{Periodicity}) $\Psi(\theta,r)$ is $2\pi$-periodic in the first variable $\theta$;
		\item (\emph{Origin}) $\Psi^{-1}(0_{\R^2})=\R\times\{0\}$;
		\item (\emph{Continuity}) $\Psi$ is continuous and its restriction to $\Hs$ is a local homeomorphism;
		\item (\emph{Bijectivity}) $\Psi$ defines a bijection between $[0,2\pi)\times (0,+\infty)$ and $\R^2\setminus\{0_{\R^2}\}$;
		\item  (\emph{Orientation}) the map $\Psi$ is orientation preserving.
	\end{itemize}
\end{defin}
We observe that the definition above, due to the orientation preserving assumption, applies to classical clockwise polar coordinates, but not to counterclockwise polar coordinates.

\begin{defin}[Annulus] \label{def:annulus}
	Let $\inn U$ and $\out U$, be two open, bounded subsets of $\R^2$, such that $0_{\R^2}\in\inn U\subset \clos{\inn U}\subset \out U$. We call (generalized) \emph{annulus} any set of the form $A=\clos{\out U}\setminus \inn U$.
\end{defin}

Notice that  our notion of annulus is more general than its classical meaning, where the sets $\out U,\inn U$ are concentric balls (or at least two simply connected sets).

Given a (generalized) annulus $A$, we denote with $\As$ the corresponding set $\As:=\Psi^{-1}(A)\subset \Hs$. Moreover, we write $\inn\Us:=\Psi^{-1}(\inn U)$ and $\out\Us:=\Psi^{-1}(\out U)$, so that $\As=\clos{\out\Us}\setminus \inn\Us$. 

Let us consider an open, bounded set $U$ with $0_{\R^2}\notin \partial U$ and set $\Us=\Psi^{-1}(U)$; we denote with $\partial U$ the boundary of $U$ and with $\partial \Us$ the boundary of $\Us$ relative to $\clos \Hs$, so that $\partial \Us=\Psi^{-1}(\partial U)$. 
For every $r>0$, we denote with $\Bs_r$ the strip $\Bs_r:=\R\times[0, r)$ and with $B_r$ the corresponding open set $B_r:=\Psi(\Bs_r)$. Notice that, for canonical polar coordinates, $B_r$ is the open ball of radius $r$ centred in the origin. As  the Reader might have observed, generally we will use a capital letter for sets in the plane $\R^2$ and its script version for their corresponding sets in the halfplane $\clos\Hs$. 

 The identity map is denoted with $\I$, possibly with a subscript indicating its domain. Moreover, given a continuous function $F\colon\Psi^{-1}(E)\to \R$ which is $2\pi$-periodic in $\theta$ (and constant on $\partial \HH$ if $0_{\R^2}\in E$), we  denote as $F\circ\Psi^{-1}\colon E\to \R^2$ the function which is more properly defined as $F\circ\left(\Psi\rvert_{\{(0,0)\}\cup[0,2\pi)\times (0,+\infty)}\right)^{-1}$.

We also observe that polar-like coordinates can be used to extend action-angle coordinates defined on a (classical) annulus, as those produced by a family of concentric periodic orbits of an autonomous planar Hamiltonian systems (of which one may study non-autonomous perturbations). In this case each component of boundary of the annulus may even not be star-shaped in $\R^2$, yet be expressed in suitable polar-like coordinates as $\partial B_r$; moreover, direction of the rotating component follows that of the periodic orbits, instead of being defined by canonical polar-coordinates. 

\medbreak

We denote with $\deg(g,U,v)$ Brouwer's topological degree of a continuous map $g\colon \clos U\to \R^2$ with respect to an open bounded set $U\subset \R^2$ and a value $v\in\R^2$ such that $v\notin g(\partial U)$.
Since the sets $B_r$ are homeomorphic to a ball, we have the following result.
\begin{prop} \label{prop:gradopalla}
	Let $P\colon \clos{B_r}\to \R^2$ be a continuous map, with $B_r\subset \R^2$ defined as above. If, for every $x\in \partial B_r$ we have $P(x)\in\clos{B_r}\setminus\{x\}$, then $\deg(P-\I,B_r,0)=1$.
\end{prop}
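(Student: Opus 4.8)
The plan is to reduce the statement to the classical Rothe-type computation on a standard disk, the only real difficulty being that $B_r$ need not be star-shaped (or even convex) in $\R^2$, so the naive radial homotopy $x\mapsto x-tP(x)$ cannot be used directly in the plane. First I would record that the boundary hypothesis $P(x)\in\clos{B_r}\setminus\{x\}$ gives in particular $P(x)\neq x$ on $\partial B_r$, so that $0\notin(P-\I)(\partial B_r)$ and the degree $\deg(P-\I,B_r,0)$ is well defined. Since we work in dimension two, $\deg(P-\I,B_r,0)=(-1)^2\deg(\I-P,B_r,0)=\deg(\I-P,B_r,0)$, so it suffices to prove $\deg(\I-P,B_r,0)=1$, i.e.\ that the fixed-point index $\ind(P,B_r)$ equals $1$.

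The key step is to replace $P$ by a self-map without changing the degree. From the defining properties of $\Psi$, the set $\clos{B_r}=\Psi(\clos{\Bs_r})$ with $\clos{\Bs_r}=\R\times[0,r]$ is a closed topological disk: the strip $\R\times[0,r]$, quotiented by the $2\pi$-periodicity in the first variable and with its bottom edge $\R\times\{0\}$ collapsed to the origin (as dictated by $\Psi^{-1}(0_{\R^2})=\R\times\{0\}$ and the bijectivity property), is a $2$-cell, and $\Psi$ induces a homeomorphism onto $\clos{B_r}$. Hence $\clos{B_r}$ is a compact absolute retract and, being a closed subset of $\R^2$, a retract of the whole plane. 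Let $\rho\colon\R^2\to\clos{B_r}$ be a retraction and set $Q:=\rho\circ P$, a continuous self-map of $\clos{B_r}$. On $\partial B_r$ the hypothesis $P(x)\in\clos{B_r}$ forces $\rho(P(x))=P(x)$, so $Q=P$ on $\partial B_r$; in particular $Q$ is fixed-point free on $\partial B_r$. Since Brouwer's degree depends only on the boundary values (the straight-line homotopy between $\I-P$ and $\I-Q$ is constant on $\partial B_r$, hence admissible), we get $\deg(\I-P,B_r,0)=\deg(\I-Q,B_r,0)$.

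It then remains to compute $\deg(\I-Q,B_r,0)=1$ for the self-map $Q$, and here I would transport the problem to the unit disk $D$. Choosing an (orientation-preserving) homeomorphism $h\colon\clos{B_r}\to\clos D$ and using the invariance of the fixed-point index under homeomorphisms, $\deg(\I-Q,B_r,0)=\deg(\I-\tilde Q,D,0)$, where $\tilde Q:=h\circ Q\circ h^{-1}\colon\clos D\to\clos D$ is again a self-map, fixed-point free on $\partial D$. On the standard disk the radial homotopy $H_t(x)=x-t\,\tilde Q(x)$, $t\in[0,1]$, is now available: for $x\in\partial D$ a zero $x=t\tilde Q(x)$ would force $1=\abs{x}=t\,\abs{\tilde Q(x)}\le t\le 1$, hence $t=1$ and $x=\tilde Q(x)$, contradicting fixed-point freeness on $\partial D$. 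Thus $H_t$ is admissible and joins $\I$ to $\I-\tilde Q$, giving $\deg(\I-\tilde Q,D,0)=\deg(\I,D,0)=1$, and the claim follows.

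I expect the main obstacle to be exactly the passage that circumvents non-star-shapedness: one must resist applying the Euclidean radial homotopy on $B_r$ and instead justify (i) that $\clos{B_r}$ is a retract of $\R^2$, and (ii) that replacing $P$ by $\rho\circ P$ does not alter the degree. Both rest on standard facts — the absolute-retract property of a topological cell and the dependence of Brouwer's degree only on boundary values — so once they are in place the reduction to the disk and the final homotopy are routine.
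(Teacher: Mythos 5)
Your proof is correct and fills in precisely the argument the paper leaves implicit: the proposition is stated there without proof, justified only by the preceding remark that the sets $B_r$ are homeomorphic to a ball, which is exactly your reduction (retract $P$ into $\clos{B_r}$ without changing boundary values, conjugate to the standard disk via the invariance of the fixed-point index, then apply the Rothe-type radial homotopy). No gaps.
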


Given an isolated fixed point $x$ of $P$ with an isolating neighbourhood $U_x$, we denote its fixed-point index as $\ind(P,x):=\deg(\I-P,U_x,0)=\deg(P-\I,U_x,0)$, where the last equality holds since we are in dimension two.

%%%%%%%%%%%%%%%%%%%%%%%%%%%%%%%%%%%%%%%%%%%%%%%%%%%%%%%%%%%%%%
\subsection{Evolution maps, periodic solutions and rotation} \label{sec:flow}

Let us consider  two  sets $E\subseteq \Omega \subseteq \R^2$  and the dynamics described by a continuous function $\phi\colon [0,T]\times \Omega\to \R^2$. The reader may visualize $\phi(t,x)$ as the Poincaré-time map, for instance, of a system of ordinary differential equations, that associates to each initial state $x$ at time zero the corresponding state at time $t$. Notice that, however, we require only that 
\begin{equation}\label{cond:flow}
	\text{$\phi$ is continuous and $\phi(0,\cdot)=\I$}
\end{equation} without any additional structure or regularity. This means for instance that we do not require backwards uniqueness, nor the structure of semi-dynamical system, since the orbits of different initial points may cross each other at a given time and state, but continue differently.

Indeed, the only role of the evolution map $\phi$ is to identify a specific lift $\Phi(T,\cdot)$ of the map $\phi(T,\cdot)$, restricted to $E$, to the halfplane $\Hs$, among all the possible lifts $\Phi(T,\cdot)+(2i\pi,0)$.
In order to do so, we assume that every trajectory starting from the set $E$ does not cross the origin of $\R^2$, namely
\begin{equation}\label{eq:lift_cond}
0_{\R^2}\notin\phi([0,T]\times E) \,.
\end{equation}
For convenience's sake, let us define the set $\Null\subset \R^2$ as
\begin{equation}\label{eq:defNull}
	\Null:=\{x\in \Omega : \phi(t,x)=0 \quad\text{for some $t\in[0,T]$}\}\,,
\end{equation}
so that condition \eqref{eq:lift_cond} reads $E \cap \Null=\emptyset$. 

Assuming \eqref{eq:lift_cond}, let us write $\Es=\Psi^{-1}(E)\subset \Hs$. We introduce $\Phi\colon [0,T]\times\Es\to\Hs$ as the unique continuous lift in polar-like coordinates  of the map $\phi$; its wellposedness $\Phi$ is proven in \cite[Theorem 11]{GidODE}. In other words, $\Phi=\Phi(t,(\theta,r))$ is the unique continuous map such that $\Phi(0,\cdot)=\I_\Es$ and
\begin{equation}
\Psi(\Phi(t,y))=\phi(t,\Psi(y)) \quad\text{for every $t\in[0,T],y\in\Es$} \,.
\end{equation}
In particular, this implies that 
\begin{equation}
\Phi(t,(\theta+2\pi,r))=\Phi(t,(\theta,r))+(2\pi,0) \quad\text{for every $(t,(\theta,r))\in[0,T]\times\Es$} \,.
\end{equation}

When there is no ambiguity, we will adopt the usual abbreviated notation $\phi_T,\Phi_T$ respectively for the functions $\phi(T,\cdot)$ and $\Phi(T,\cdot)$.

We are interested in studying the fixed points of $\phi_T$, corresponding to $T$-periodic solutions of the $T$-periodic extension in time of the map $\phi$. To do so, we introduce the function $f_T\colon A\to \R^2$ defined as $f_T(x):=\phi(T,x)-x$. Hence, the fixed points of $\phi_T$ coincide with the zeros of $f_T$.

We proceed similarly for the corresponding map $\Psi(T,\cdot)$. We therefore introduce the function $F_T\colon\Es\to \R^2$ defined as $F_T(y):=\Phi(T,y)-y$. For every index $i\in \Z$, we denote $\nu_i=(2i\pi,0)$. The map $F_T$ allows us to define the rotation performed by the trajectory of a point $x$ in the period $[0,T]$. 

\begin{defin}[Rotation]
	Let $E\subset \R^2$ be a subset such that $0_{\R^2}\notin E$ and set $\Es=\Psi^{-1}(E)\subseteq \Hs$. Let $F\colon \Es\to \Hs$ be a continuous map, $2\pi$-periodic in the first variable. We identify its two components as $F=:(F^\theta,F^r)$.  We define the rotation associated to the point $x\in E$ with respect to the map $F$ as
	\begin{equation*}
	\rot (F,x) := \frac{1}{2\pi}\,F^\theta(y)
	\end{equation*}
	  for any $y\in \Psi^{-1}(x)$. 
We denote the rotation of the whole set $E$ accordingly, as $\rot (F,E) :=\{\rot(F,x) : x\in E \}$. Moreover, we define the set $\Sigma (F,E)\subset \Z$ as
	\begin{equation*}
\Sigma (F,E) :=\Z\cap \conv \rot (F, E) \,,
	\end{equation*}
where $\conv$ denotes the convex envelope.
\end{defin}
We highlight that if $x$ is a fixed point of $\phi_T$, its rotation $\rot(F_T,x)$ is always an integer.
Often we will consider the case $E=\partial U$ for some open bounded set $U$; thus, since $\partial U$ is compact, also $\rot(F,\partial U)$ is compact. Notice however, that, if $\partial U$ is not connected, for instance when $U$ is the interior of an annulus, then also $\rot(F,\partial U)$ may not be connected. However,  $\conv \rot(F,\partial U)$ is always a compact interval, hence  $\Sigma(F,\partial U)$ has always a finite number (possibly zero) of elements, which are all consecutive integers.

%%%%%%%%%%%%%%%%%%%%%%%%%%%%%%%%%%%%%%%%%%%

\subsection{A  degree theory for evolution maps in polar-like coordinates}

In the following, we assume that $U\subset \R^2$ is an open bounded set such that $0_{\R^2}\notin\partial U$ and set $\Us=\Psi^{-1}(U)$. Notice that $\partial \Us=\Psi^{-1}(\partial U)$ and it is $2\pi$-periodic in the first variable, meaning $(\theta,r)\in \partial \Us$ if and only if $(\theta+2\pi,r)\in \partial \Us$.

\begin{defin} \label{def:mydeg}
	 Let $F\colon \partial\Us\to \R^2$ be a continuous function, $2\pi$-periodic in the first variable, and pick $v\in \R^2$. Assume that $F(\theta,r)\neq v$ for every $(\theta,r)\in \partial\Us$. We now pick any continuous function $F_\mathrm{ext}\colon \clos\Hs\to \R^2$, $2\pi$-periodic in the first variable, such that $F_\mathrm{ext}(\theta,r)\equiv F(\theta,r)$ for every $(\theta,r)\in \partial\Us$, and $F_\mathrm{ext}(\cdot,0)\equiv v_0$ for some constant $v_0\in\R^2$.
	We define the degree $\mydeg(F,r,v)$ as
\begin{equation*}
\mydeg(F,U,v):=\deg(F_\mathrm{ext}\circ \Psi^{-1},U,v)	 \,.
\end{equation*}	
\end{defin}

Let us observe that a continuous and periodic extension $F_\mathrm{ext}$ can always be constructed, via Tietze's extension Theorem applied to the closed subset $\partial U\cup \{0_{\R^2}\}\subset \R^2$ and the continuous function $g$ defined as $g(x)=F\circ \Psi^{-1}(x)$ for $x\in \partial U$ and such that $g(0)=v_0$. Moreover, the map $F_\mathrm{ext}\circ \Psi^{-1}\colon \R^2\to \R^2$ is a continuous function, so its topological degree is well defined. Notice also that, by the homotopy property of the topological degree, $\mydeg$ does not depend on the choice of the extension.

Let us now enunciate some properties of the new degree $\mydeg$ we have introduced, which follow as direct corollaries of the classical (corresponding) properties of Brouwer's degree, see, e.g., \cite{Deim,Fons}.

\begin{corol}[Homotopy] \label{cor:homot}
	Let $U\subset\R^2$ be an open bounded set such that $0_{\R^2}\notin \partial U$. Let $h\colon [0,1]\times \partial \UU  \to \R^2\setminus\{v\}$ be a continuous function, $2\pi$-periodic in $\theta$. Writing $F_0:=h(0,\cdot)$ and $F_1:=h(1,\cdot)$, we have
\begin{equation}
 \mydeg(F_0,U,v)=\mydeg(F_1,U,v) \,.
\end{equation}
\end{corol}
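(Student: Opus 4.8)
The plan is to deduce the homotopy invariance of $\mydeg$ directly from the classical homotopy invariance of Brouwer's degree, by transporting the given admissible homotopy $h$ on $\partial\Us$ to an admissible homotopy of planar maps on $\clos U$. The key observation is that $\mydeg$ is \emph{defined} as a Brouwer degree of an extended map composed with $\Psi^{-1}$, so the entire task reduces to producing a suitable family of extensions that interpolates continuously between the chosen extensions $F_{\mathrm{ext},0}$ of $F_0$ and $F_{\mathrm{ext},1}$ of $F_1$, while never hitting $v$ on $\partial U$.

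First I would fix, for each parameter $s\in[0,1]$, the boundary function $F_s:=h(\cdot,s)$, and note that by hypothesis $F_s(\theta,r)\neq v$ for all $(\theta,r)\in\partial\Us$ and all $s$, so each $\mydeg(F_s,U,v)$ is well defined. The natural candidate for the interpolating planar homotopy is $H\colon\clos U\times[0,1]\to\R^2$ obtained by extending the two-parameter boundary datum: precisely, apply Tietze's extension theorem to the closed set $(\partial U\cup\{0_{\R^2}\})\times[0,1]\subset\R^2\times[0,1]$ and the continuous map sending $(x,s)\mapsto h(\Psi^{-1}(x),s)$ on $\partial U\times[0,1]$ and $(0_{\R^2},s)\mapsto v_0(s)$, where $v_0(s)$ is any continuous choice of interior base value (e.g.\ a fixed constant, or a path interpolating the two original base values). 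This yields a continuous $H_{\mathrm{ext}}\colon\clos\Hs\times[0,1]\to\R^2$, $2\pi$-periodic in $\theta$, with $H_{\mathrm{ext}}(\theta,r,s)\equiv F_s(\theta,r)$ on $\partial\Us$ and $H_{\mathrm{ext}}(\cdot,0,s)\equiv v_0(s)$, so that each slice $H_{\mathrm{ext}}(\cdot,\cdot,s)$ is an admissible extension of $F_s$ in the sense of Definition \ref{def:mydeg}.

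Next I would verify admissibility of the transported homotopy. Setting $H(x,s):=H_{\mathrm{ext}}(\Psi^{-1}(x),s)$, the map $H\colon\clos U\times[0,1]\to\R^2$ is continuous (the composition with $\Psi^{-1}$ is continuous off the origin, and the constancy of $H_{\mathrm{ext}}$ on $\partial\HH$ guarantees continuity at $0_{\R^2}$, exactly as in the remark following Definition \ref{def:mydeg}). For $x\in\partial U$ we have $H(x,s)=h(\Psi^{-1}(x),s)\neq v$ by assumption, so $v\notin H(\partial U,s)$ for every $s\in[0,1]$. The classical homotopy invariance of Brouwer's degree then gives $\deg(H(\cdot,0),U,v)=\deg(H(\cdot,1),U,v)$. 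By construction $H(\cdot,0)=F_{\mathrm{ext},0}\circ\Psi^{-1}$ and $H(\cdot,1)=F_{\mathrm{ext},1}\circ\Psi^{-1}$, where $F_{\mathrm{ext},0},F_{\mathrm{ext},1}$ are admissible extensions of $F_0,F_1$; hence the two endpoints equal $\mydeg(F_0,U,v)$ and $\mydeg(F_1,U,v)$, which is the claim.

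The one genuine subtlety — and the step I expect to require the most care — is the well-definedness of $\mydeg$ underlying this argument, namely that the value $\mydeg(F,U,v)=\deg(F_{\mathrm{ext}}\circ\Psi^{-1},U,v)$ does not depend on the particular extension $F_{\mathrm{ext}}$ chosen. This is precisely what lets me match the endpoints of $H$ with the \emph{a priori} differently-extended maps used in the definition: any two admissible extensions of the same boundary datum agree on $\partial U$ and can be joined by a straight-line (convex) homotopy in the interior that never touches $v$ on the boundary, so they yield the same Brouwer degree. Since this independence is a direct corollary of Brouwer's homotopy property, and since the construction of $F_{\mathrm{ext}}$ via Tietze is already supplied in the text, the remaining verifications are routine and I would not belabor them.
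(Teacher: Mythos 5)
Your argument is correct and is exactly the route the paper intends: the paper states this as a ``direct corollary'' of the classical homotopy invariance of Brouwer's degree, and your construction (Tietze extension of the two-parameter boundary datum on $(\partial U\cup\{0_{\R^2}\})\times[0,1]$, transported through $\Psi^{-1}$, plus the convex-homotopy argument for independence of the choice of extension) is precisely the verification being left to the reader. No gaps.
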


\begin{corol}[Additivity] Let $U,U_1,U_2$ be open bounded sets in $\R^2$ such that $U_1\subset U$, $U_2\subset U$, $U_1\cap U_2=\emptyset$ and $0_{\R^2}\notin W:=\clos U\setminus (U_1\cup U_2)$. Let $F\colon \Psi^{-1}(W)\to \R^2$ be a continuous function, $2\pi$-periodic in $\theta$, such that $F(\theta, r)\neq v$ for every $(\theta,r)\in\Psi^{-1}(W)$. Then
	\begin{equation}
		\mydeg(F,U,v)=\mydeg(F,U_1,v)+\mydeg (F,U_2,v) \,.
	\end{equation}
\end{corol}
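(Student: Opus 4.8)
The plan is to reduce the statement to the classical additivity (excision) property of Brouwer's degree applied to a single map on $\R^2$. The essential observation is that, since $F$ is given on the whole set $\Psi^{-1}(W)$, one extension can serve simultaneously for all three degrees $\mydeg(F,U,v)$, $\mydeg(F,U_1,v)$ and $\mydeg(F,U_2,v)$.

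First I would fix one such extension. As in Definition \ref{def:mydeg}, since $0_{\R^2}\notin W$ the set $\Psi^{-1}(W)$ is contained in $\Hs$, and $F$ is continuous and $2\pi$-periodic there; applying Tietze's theorem to the closed set $W\cup\{0_{\R^2}\}$ (assigning the value $v_0$ at the origin) yields a single continuous, $2\pi$-periodic map $F_\mathrm{ext}\colon\clos\Hs\to\R^2$ with $F_\mathrm{ext}\equiv F$ on $\Psi^{-1}(W)$ and $F_\mathrm{ext}(\cdot,0)\equiv v_0$. I then set $g:=F_\mathrm{ext}\circ\Psi^{-1}\colon\R^2\to\R^2$, which is continuous precisely because $F_\mathrm{ext}$ is constant on $\partial\HH$.

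Next I would check that $g$ meets the hypotheses of classical additivity with respect to the value $v$. A short point-set argument shows $\partial U,\partial U_1,\partial U_2\subset W$: indeed $\partial U\subset\clos U$ and is disjoint from the open subsets $U_1,U_2\subset U$; while $\partial U_1\subset\clos U$ is disjoint from $U_1$ by definition and disjoint from $U_2$ because any boundary point of $U_1$ lying in the open set $U_2$ would force a neighbourhood contained in $U_2$ yet meeting $U_1$, contradicting $U_1\cap U_2=\emptyset$ (and symmetrically for $\partial U_2$). Consequently $g$ agrees with $F\circ\Psi^{-1}$ on each of these boundaries, and since $F\neq v$ on $\Psi^{-1}(W)$ we obtain $v\notin g(W)$, so in particular $v\notin g(\partial U)$, $v\notin g(\partial U_1)$ and $v\notin g(\partial U_2)$; all three Brouwer degrees are thus well defined.

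Because $F_\mathrm{ext}$ is a legitimate extension in the sense of Definition \ref{def:mydeg} for each of $U$, $U_1$ and $U_2$ (it agrees with $F$ on the respective boundaries and is constant on $\partial\HH$), the fact that $\mydeg$ does not depend on the chosen extension gives $\mydeg(F,U,v)=\deg(g,U,v)$ and likewise for $U_1,U_2$. It then remains only to invoke the classical additivity of Brouwer's degree (see, e.g., \cite{Deim,Fons}): since $U_1,U_2$ are disjoint open subsets of $U$ and $v\notin g(\clos U\setminus(U_1\cup U_2))=g(W)$, one has $\deg(g,U,v)=\deg(g,U_1,v)+\deg(g,U_2,v)$, which is the desired identity. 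The only point requiring genuine care — rather than a true obstacle — is the use of a common extension $F_\mathrm{ext}$ together with the verification that all relevant boundaries lie in $W$; once this is secured, the corollary is immediate.
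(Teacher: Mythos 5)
Your proposal is correct and is exactly the argument the paper intends: the corollary is stated there without proof as a ``direct corollary of the classical properties of Brouwer's degree'', and your reduction --- a single Tietze extension from $W\cup\{0_{\R^2}\}$ serving all three degrees, the check that $\partial U,\partial U_1,\partial U_2\subset W$, and the classical additivity/excision property applied to $g=F_\mathrm{ext}\circ\Psi^{-1}$ --- is the natural filling-in of that one-line justification. No gaps.
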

As a direct application of additivity, we have the following result.
\begin{corol}\label{corol:annulus_degree}
	Given a generalized annulus $A=\clos{\out U}\setminus \inn U$, let $F\colon \As\to \R^2$ be a continuous function, $2\pi$-periodic in the first variable $\theta$. Suppose that  $v\notin F(\partial \out \Us)$ and $v\notin F(\partial \inn \Us)$. Then
	\begin{equation*}
	\deg(F\circ \Psi^{-1},A,v)=\mydeg(F,\out U,v)-\mydeg(F,\inn U,v) \,.
	\end{equation*}	
\end{corol}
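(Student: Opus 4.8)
The plan is to express all three quantities through a single auxiliary map and then invoke the additivity (excision) property of Brouwer's degree. Preliminarily, observe that the left-hand side $\deg(F\circ\Psi^{-1},A,v)$ is the Brouwer degree taken on the open annulus $A_0:=\out U\setminus\clos{\inn U}$, whose boundary is $\partial\out U\cup\partial\inn U$; the two hypotheses $v\notin F(\partial\out\Us)$ and $v\notin F(\partial\inn\Us)$ state precisely that $v\notin(F\circ\Psi^{-1})(\partial A_0)$, so this degree is well defined. Since $0_{\R^2}\notin A$, the map $F\circ\Psi^{-1}$ is genuinely continuous on the compact set $\clos{A_0}\subset A$, so no extension is needed on the left-hand side.

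First I would build, through Tietze's theorem exactly as in the remark following Definition \ref{def:mydeg}, a single continuous, $2\pi$-periodic map $\widetilde F\colon\clos\Hs\to\R^2$ with $\widetilde F\equiv F$ on $\As$ and $\widetilde F(\cdot,0)\equiv v_0$ for some constant $v_0$. Because $\partial\out\Us\subset\As$ and $\partial\inn\Us\subset\As$, this $\widetilde F$ is at once an admissible extension, in the sense of Definition \ref{def:mydeg}, for the computation of both $\mydeg(F,\out U,v)$ and $\mydeg(F,\inn U,v)$. Using that $\mydeg$ does not depend on the chosen extension --- a fact that itself follows from the homotopy invariance of Brouwer's degree, since any two admissible extensions coincide on the relevant boundary and a linear homotopy between them never meets $v$ there --- I obtain, writing $g:=\widetilde F\circ\Psi^{-1}\colon\R^2\to\R^2$,
\begin{equation*}
\mydeg(F,\out U,v)=\deg(g,\out U,v),\qquad\mydeg(F,\inn U,v)=\deg(g,\inn U,v).
\end{equation*}

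Everything is now phrased through the single continuous map $g$, and I would apply the additivity property of Brouwer's degree to the pair of disjoint open sets $\inn U$ and $A_0$ inside $\out U$. The set-theoretic hypotheses are routine to check: $\inn U\cap A_0=\emptyset$, both are contained in $\out U$, and $\clos{\out U}\setminus(\inn U\cup A_0)=\partial\out U\cup\partial\inn U$, a set on which $g=F\circ\Psi^{-1}$ avoids $v$ by assumption. Additivity then yields $\deg(g,\out U,v)=\deg(g,\inn U,v)+\deg(g,A_0,v)$. Finally, since $\widetilde F\equiv F$ on $\As$ we have $g\equiv F\circ\Psi^{-1}$ on $\clos{A_0}$, whence $\deg(g,A_0,v)=\deg(F\circ\Psi^{-1},A,v)$; rearranging the additivity identity gives the claim. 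Equivalently, one may skip the explicit extension and apply the already established additivity corollary to $\out U\supset\inn U\sqcup A_0$, after noting that for a set disjoint from the origin $\mydeg(F,A_0,v)$ coincides with $\deg(F\circ\Psi^{-1},A,v)$.

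I expect the only genuine care to be bookkeeping rather than conceptual: ensuring the same extension $\widetilde F$ is admissible for both $\out U$ and $\inn U$ and that $\mydeg$ is insensitive to it, and verifying the excision exceptional set reduces exactly to $\partial\out U\cup\partial\inn U$ (via the identities $\inn U\cup A_0=\out U\setminus\partial\inn U$ and $\clos{A_0}\subset A$) so that standard Brouwer additivity applies verbatim.
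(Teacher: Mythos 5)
Your proposal is correct and follows essentially the same route as the paper, which simply invokes its Additivity corollary for the decomposition $\out U \supset \inn U \sqcup \bigl(\out U\setminus\clos{\inn U}\bigr)$ and identifies $\mydeg(F,\out U\setminus\clos{\inn U},v)$ with $\deg(F\circ\Psi^{-1},A,v)$ because the annulus avoids the origin. Your longer version merely unpacks that corollary (common Tietze extension, independence of $\mydeg$ from the extension, Brouwer additivity), and your closing ``equivalently'' remark is exactly the paper's one-line argument.
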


Let us also observe the following fact:
\begin{lemma}\label{lemma:mydegrot} Let $U$ be an open bounded set such that $0_{\R^2}\notin \partial U$ and consider a continuous function $F\colon \partial\Us\to \R^2$, $2\pi$-periodic in the first variable. Then 
	\begin{equation}
	\mydeg(F,U,v)=0 \qquad\text{for every $v=(v^\theta,v^r)$ with $v^\theta\notin \conv\rot (F, \partial U)$} \,.
\end{equation}	
	In particular it holds
	\begin{equation}
		\mydeg(F,U,\nu_i)=0 \qquad\text{for every $i\notin\Sigma(F,\partial U)$} \,.
	\end{equation}
\end{lemma}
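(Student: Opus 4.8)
The statement to prove is Lemma \ref{lemma:mydegrot}: if $v=(v^\theta,v^r)$ has $v^\theta\notin \conv\rot(F,\partial U)$, then $\mydeg(F,U,v)=0$, and consequently $\mydeg(F,U,\nu_i)=0$ whenever $i\notin\Sigma(F,\partial U)$.

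The plan is to reduce the vanishing of $\mydeg$ to a homotopy argument showing that the extended map $F_\mathrm{ext}$ can be pushed, through admissible homotopies, to a map whose image avoids an entire vertical line $\{\theta = v^\theta\}$ in the $(\theta,r)$-plane, so that $v$ is never attained and Brouwer's degree is trivially zero. First I would unwind the definition: by Definition \ref{def:mydeg}, $\mydeg(F,U,v)=\deg(F_\mathrm{ext}\circ\Psi^{-1},U,v)$ for any admissible periodic extension $F_\mathrm{ext}$ of $F$ with $F_\mathrm{ext}(\cdot,0)\equiv v_0$ constant. Since the definition is independent of the choice of extension (this independence is what makes $\mydeg$ well-defined, and follows from Corollary \ref{cor:homot}), I am free to select a convenient $F_\mathrm{ext}$. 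The key observation is that the hypothesis $v^\theta\notin\conv\rot(F,\partial U)$ means the first component $F^\theta$ on $\partial\Us$ stays strictly on one side of $2\pi v^\theta$; more precisely, writing $\rot(F,x)=\tfrac{1}{2\pi}F^\theta(y)$, the set $\{F^\theta(y): y\in\partial\Us\}=2\pi\rot(F,\partial U)$ is contained in a closed interval missing $2\pi v^\theta$, so either $F^\theta > 2\pi v^\theta$ throughout $\partial\Us$ or $F^\theta < 2\pi v^\theta$ throughout.

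The main step is then to construct the extension so this separation propagates. I would define $F_\mathrm{ext}$ by first extending the first component $F^\theta$ to all of $\clos\Hs$ so that it never equals $2\pi v^\theta$, which is possible precisely because on the closed set $\partial U\cup\{0_{\R^2}\}$ the prescribed boundary values (and the constant value $v_0^\theta$ at the origin, chosen on the same side) all lie strictly on one side of $2\pi v^\theta$: one can take, for instance, a Tietze extension followed by composing with a retraction of $\R$ onto the appropriate half-line $(2\pi v^\theta,+\infty)$ or $(-\infty,2\pi v^\theta)$, which preserves boundary values and the strict inequality. The second component $F^r$ may be extended arbitrarily (via Tietze) subject to matching on $\partial\Us$ and being constant on $\partial\Hs$. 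With this choice, the first coordinate of $F_\mathrm{ext}\circ\Psi^{-1}$ never takes the value $2\pi v^\theta$ on $\clos U$, so $F_\mathrm{ext}\circ\Psi^{-1}(x)\neq v$ for all $x\in\clos U$; in particular $v$ is not in the image, whence $\deg(F_\mathrm{ext}\circ\Psi^{-1},U,v)=0$ by the standard property that Brouwer's degree vanishes when the target value is not attained on the closure.

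The final assertion is an immediate specialization: taking $v=\nu_i=(2i\pi,0)$, the condition $i\notin\Sigma(F,\partial U)=\Z\cap\conv\rot(F,\partial U)$ together with $i\in\Z$ forces $i\notin\conv\rot(F,\partial U)$, i.e. $v^\theta=i\notin\conv\rot(F,\partial U)$, so the first part applies verbatim. The step I expect to require the most care is guaranteeing that the extension of $F^\theta$ can be arranged to miss the value $2\pi v^\theta$ on \emph{all} of $\clos\Hs$ while remaining continuous, periodic, and constant on $\partial\Hs$; the subtlety is that the origin value $v_0$ is a free constant in the definition, so I must verify it can be chosen with $v_0^\theta$ on the correct side of $2\pi v^\theta$ — but this is exactly the liberty Definition \ref{def:mydeg} grants, and the retraction trick then handles the extension uniformly without disturbing the boundary data.
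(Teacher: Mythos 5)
Your proof is correct, but it takes a different route from the paper's. The paper homotopes $F$ on $\partial\Us$ to a constant map $G$ with first component inside $\conv F^\theta(\partial \Us)$, via the linear homotopy $(1-\lambda)G+\lambda F(y)$; convexity guarantees the homotopy avoids $v$ on $\partial\Us$, and the constant map visibly has $\mydeg=0$, so Corollary \ref{cor:homot} finishes the argument in two lines. You instead exploit the freedom in Definition \ref{def:mydeg}: you build one specific extension $F_\mathrm{ext}$ whose first component is pushed, by a retraction onto a half-line, to miss the target value on all of $\clos U$, so that $\deg(F_\mathrm{ext}\circ\Psi^{-1},U,v)=0$ by the solution property of Brouwer's degree. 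Both arguments rest on the same geometric fact --- that $F^\theta(\partial\Us)$ lies in a compact interval separated from the relevant value --- but yours trades the homotopy corollary for an explicit construction and uses only the most elementary property of the degree; the paper's is shorter once the homotopy invariance of $\mydeg$ is in place. Two small points. First, the independence of $\mydeg$ from the choice of extension (which you rightly invoke) is most directly a consequence of the boundary-dependence property of Brouwer's degree rather than of Corollary \ref{cor:homot} itself, though either works. Second, keep the $2\pi$ bookkeeping straight: since $\rot(F,x)=\tfrac{1}{2\pi}F^\theta(y)$, the separation hypothesis should be read as $v^\theta\notin\conv F^\theta(\partial\Us)=2\pi\conv\rot(F,\partial U)$ (this is the reading under which the specialization to $\nu_i=(2i\pi,0)$ and $i\notin\Sigma(F,\partial U)$ goes through, and it is the one the paper's own proof implicitly uses); in your final paragraph you write $v^\theta=i$ where $v^\theta=2i\pi$, a harmless slip inherited from the statement's notation but worth fixing.
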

\begin{proof}
	First of all, let us notice that for every constant function $G\colon \partial \Us \to \R^2$ such that $G\equiv(G^\theta,G^r) \neq v$ we have
		\begin{equation}
		\mydeg(G,U,v)=0  \,.
	\end{equation}
Now set $G^\theta\in\conv\rot (F,\partial U)$ and consider the homotopy $h\colon [0,1]\times \partial\UU$ defined as
\begin{equation*}
	h(\lambda,y)=(1-\lambda)G+\lambda F(y) \,.
\end{equation*}
By the assumptions of the Lemma and the convexity of $\conv\rot (F,\partial U)$, we clearly have $v\notin h([0,1]\times \partial \Us)$, hence the thesis follows by Corollary \ref{cor:homot}.
\end{proof}

The winding number of $\Psi^{-1}\circ F_T$ along a path has been considered, for instance, in some proofs of the Poincaré--Birkhoff Theorem \cite{BroNeu,LeCal} and can be seen as a special case of $\mydeg(F,U,0)$ where $U$ is the region enclosed by a closed path. Our alternative and more abstract construction of $\mydeg(F,U,v)$ is however not a generalization for its own sake, since both the extension to a general open set $U$ and the possibility to simultaneously consider different values of $v$  will be pivotal for our main results and applications.
%%%%%%%%%%%%%%%%%%%%%%%%%%%%%%%%%%%%%%%%%%%%%%%%%%%%%%%%%%%%%%%%%%%%

\section{A characterization of the $\mydeg$ degree} \label{sec:comp}

We now study in more detail the framework introduced in the previous section. In particular, we plan to investigate the relationship between the value of $\deg(f_T,U,0)$ and the values of $\mydeg(F_T,U,\nu_i)$, where we recall that $\nu_i:=(2\pi i,0)$. There is a substantial difference whether or not the origin belongs to $U$, as we see in the following theorem, cf.~also Figure \ref{fig:zero}.

\begin{theorem} \label{th:degree}
Let $U\subset \R^2$ be an open bounded set. Consider a map $\phi\colon[0,T]\times \clos U \to\R^2$ as in \eqref{cond:flow} and such that $\Null\cap \partial U=\emptyset$, i.e.~$0_{\R^2}\notin\phi([0,T]\times \partial U)$. The maps $f_T\colon\clos{U}\to\R^2$ and $F_T\colon\partial \Us \to\Hs$ are defined as in Section \ref{sec:flow}. Then
\begin{align}\label{eq:decomp}
	&\deg(f_T,U,0)=1+\sum_{i\in\Sigma(F_T,\, \partial U)}\mydeg(F_T,U,\nu_i) &&\text{if $0_{\R^2}\in U$} \,;\\[3mm]
	&\deg(f_T,U,0)=\sum_{i\in\Sigma(F_T,\, \partial U)}\mydeg(F_T,U,\nu_i) &&\text{if $0_{\R^2}\notin \clos U$} \,.
\end{align}
\end{theorem}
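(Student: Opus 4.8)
The plan is to relate the Brouwer degree of $f_T$ on $U$ to the new degree $\mydeg$ by exploiting the decomposition of the angular target values $\nu_i$ and the multiplicative/boundary structure of the lift. The key geometric idea is that $f_T(x) = \phi_T(x) - x = 0$ in the plane corresponds, after lifting, to $\Phi_T(y) - y \in \{\nu_i : i \in \Z\}$, i.e.\ to $F_T(y) = \nu_i$ for some integer $i$. This is because $\phi_T(x) = x$ exactly when the lifted endpoint differs from the starting point by an integer number of full rotations $\nu_i = (2\pi i, 0)$ together with zero radial displacement. So morally $\deg(f_T, U, 0)$ should split as a sum of the degrees $\mydeg(F_T, U, \nu_i)$ counting solutions with each fixed rotation number $i$; the only subtlety is the extra $+1$ term when $0_{\R^2}\in U$.

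First I would make the correspondence precise. I would consider the map $f_T \circ \Psi^{-1} = (\phi_T - \I)\circ \Psi^{-1}$ on $\clos U$ away from the origin, and compare its zero set with the preimages under $F_T$ of the lattice $\{\nu_i\}$. On $\partial U$ the hypothesis $\Null\cap\partial U = \emptyset$ guarantees $\phi_T$ never hits the origin, so the lift $F_T$ is well defined on $\partial\Us$ and the degrees $\mydeg(F_T, U, \nu_i)$ all make sense. By Lemma \ref{lemma:mydegrot}, $\mydeg(F_T, U, \nu_i) = 0$ for all $i \notin \Sigma(F_T, \partial U)$, so the sum in \eqref{eq:decomp} is finite and runs precisely over the relevant integers.

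The heart of the argument is a local-degree / change-of-variables computation. I expect to build a homotopy or a direct degree identity using the \emph{orientation-preserving} property of $\Psi$: since $\Psi$ is an orientation-preserving local homeomorphism on $\Hs$, the degree of $f_T\circ\Psi^{-1}$ at $0$ is controlled by the sum of the local contributions of $F_T$ at each of the target points $\nu_i$, each contribution being exactly $\mydeg(F_T, U, \nu_i)$. Concretely, I would argue that near any solution $y$ with $F_T(y) = \nu_i$, the local index of $f_T\circ\Psi^{-1}$ at the corresponding plane point equals the local index of $F_T$ at $\nu_i$, because $\Psi$ preserves orientation and maps a neighborhood homeomorphically. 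Summing over all such solutions in a fundamental domain $[0,2\pi)$ gives the claimed identity.

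The \textbf{main obstacle} I anticipate is the $+1$ discrepancy between the two cases, which comes from the behavior of $\Psi$ at the origin. When $0_{\R^2}\in U$, the origin $x = 0_{\R^2}$ is automatically a ``fixed point'' in the sense that $\phi(0,\cdot)=\I$ and the collapse $\Psi^{-1}(0_{\R^2}) = \R\times\{0\}$ means the whole boundary line $\partial\Hs$ maps to a single point; the extension $F_\mathrm{ext}(\cdot,0)\equiv v_0$ in Definition \ref{def:mydeg} contributes an extra unit of degree reflecting the winding of $\Psi$ around the origin. I would isolate this by excising a small ball $B_{r_0}$ around the origin (guaranteed to avoid $\phi([0,T]\times\partial U)$ by the remark following \eqref{eq:defNull}), applying additivity of Brouwer's degree to split $\deg(f_T, U, 0)$ into a contribution on $B_{r_0}$ and one on the annular region $U\setminus \clos{B_{r_0}}$, then invoking Proposition \ref{prop:gradopalla} to show the inner ball contributes exactly $+1$, while the outer annular part is handled by Corollary \ref{corol:annulus_degree} and the lift computation above. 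In the case $0_{\R^2}\notin\clos U$ this excision is unnecessary and the identity holds without the correction term.
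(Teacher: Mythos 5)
Your high-level picture is the right one --- zeros of $f_T$ correspond to points where a lift hits some $\nu_i$, Lemma \ref{lemma:mydegrot} truncates the sum to $\Sigma(F_T,\partial U)$, and the $+1$ comes from excising a small ball around the collapsed origin --- but the execution has a genuine gap: you work throughout with $f_T$ and $F_T$ on the \emph{interior} of $U$, and neither is controlled there. The hypothesis is only $\Null\cap\partial U=\emptyset$; trajectories starting \emph{inside} $U$ may pass through the origin, so the lift $F_T$ is defined only on $\partial\Us$, not near an arbitrary zero of $f_T$ in $U$. Consequently your ``heart of the argument'' (comparing the local index of $f_T$ at an interior zero with the local index of $F_T$ at $\nu_i$) cannot even be formulated as stated. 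Likewise, your excision step applies Proposition \ref{prop:gradopalla} and additivity to $f_T$ on $B_{r_0}$, but nothing guarantees $\phi_T(\partial B_{r_0})\subset\clos{B_{r_0}}$, nor even that $f_T\neq 0$ on $\partial B_{r_0}$; the remark after \eqref{eq:defNull} only says trajectories starting on $\partial U$ avoid a small ball, not that the ball is positively invariant.

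The paper's proof fixes exactly this by never using $f_T$ or $F_T$ in the interior. It constructs, by Tietze's theorem, an artificial periodic extension $\widetilde F$ of $F_T\rvert_{\partial\Us}$ to all of $\Us$ with prescribed properties: constant equal to $(2\pi\alpha,0)$ with $\alpha\notin\Sigma(F_T,\partial U)$ on a neighbourhood of $\partial\Hs$ (so the induced planar map $\tilde p$ sends $\clos{B_{r_0}}$ into itself and Proposition \ref{prop:gradopalla} gives the $+1$), radial component bounded below (so the lifting arguments go through on the annulus $A_0$), and rotation confined to $\conv\rot(F_T,\partial U)$ (so only the integers in $\Sigma$ contribute). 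Since $\tilde f=f_T$ on $\partial U$, the degrees on $U$ agree, and the rest is the $\CC^1$/Sard approximation plus the fixed-point-index commutativity and multiplication formula to get the local identity \eqref{eq:fixedpointdeg} --- the step you gestured at via orientation preservation, which also silently uses that $\deg(\I-p)=\deg(p-\I)$ in dimension two. Your outline would become a proof only after inserting this extension construction; as written, the two central steps rest on properties of $f_T$ and $F_T$ in regions where the hypotheses give no information.
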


\begin{figure}[tb]
	\centering 
	\begin{tikzpicture}[line cap=round,line join=round,>=stealth,x=1.0cm,y=1.0cm, line width=1.1pt, scale=0.8]
		\clip(-3.,-3.) rectangle (9.,5.);
		\draw [line width=0.8pt] (0.,0.) circle (2.cm);
		\draw [line width=0.8pt] (5.,3.) circle (1.cm);
		\draw [->,color=ForestGreen] (0.,2.) -- (1.41,1.41);
		\draw [->,color=ForestGreen] (1.73,1.) -- (1.93,-0.52);
		\draw [->,color=ForestGreen] (1.73,-1.) -- (0.52,-1.93);
		\draw [->,color=ForestGreen] (0.,-2.) -- (-1.41,-1.41);
		\draw [->,color=ForestGreen] (-1.73,-1.) -- (-1.93,0.52);
		\draw [->,color=ForestGreen] (-1.73,1.) -- (-0.52,1.93);
		\draw [->,color=ForestGreen] (5.80,3.6) -- (6.65,-1.56);
		\draw [->,color=ForestGreen] (5.92,2.61) -- (6.03,-2.34);
		\draw [->,color=ForestGreen] (5.12,2.01) -- (5.04,-2.2);
		\draw [->,color=ForestGreen] (4.2,2.4) -- (4.67,-1.27);
		\draw [->,color=ForestGreen] (4.08,3.39) -- (5.29,-0.49);
		\draw [->,color=ForestGreen] (4.88,3.99) -- (6.28,-0.63);
		\draw [->,color=Dandelion] (0.,2.) -- (1.4,2.);
		\draw [->,color=Dandelion] (1.73,1.) -- (3.13,1.);
		\draw [->,color=Dandelion] (1.73,-1.) -- (3.13,-1.);
		\draw [->,color=Dandelion] (0.,-2.) -- (1.4,-2.);
		\draw [->,color=Dandelion] (-1.73,-1.) -- (-0.33,-1.);
		\draw [->,color=Dandelion] (-1.73,1.) -- (-0.33,1.);
		\draw [->,color=Dandelion] (4.88,3.99) -- (6.28,3.99);
		\draw [->,color=Dandelion] (5.8,3.6) -- (7.2,3.6);
		\draw [->,color=Dandelion] (5.92,2.61) -- (7.32,2.61);
		\draw [->,color=Dandelion] (5.11,2.01) -- (6.52,2.01);
		\draw [->,color=Dandelion] (4.2,2.4) -- (5.6,2.4);
		\draw [->,color=Dandelion] (4.08,3.39) -- (5.48,3.39);
		\draw [fill=black] (0.,0.) circle (1pt) node[below]{$0_{\mathbb R^2}$};
		\draw (-1.8,1.8) node {$U_0$};
		\draw (4.1,4.1) node {$U_1$};
		\draw [color=Dandelion] (7.5,3.1) node {$F_T\circ\Psi^{-1}$};
		\draw [color=ForestGreen] (7,0) node{$f_T$};
		\draw [color=Dandelion] (1,2.4) node {$F_T\circ\Psi^{-1}$};
		\draw [color=ForestGreen] (0.7,1.2) node{$f_T$};
	\end{tikzpicture}
	\captionsetup{singlelinecheck=off}
	\caption[.]{An illustration of the difference between the two alternative cases in Theorem \ref{th:degree}, represented by the two sets $U_0\ni 0_{\R^2}$ and $U_1\not\ni 0_{\R^2}$, for an evolution map $\phi$ rotating the plane around the origin clockwise by $\pi/4$ radians. The map $f_T$, drawn in green, satisfies $\deg(f_T,U_0,0)=1$ and $\deg(f_T,U_1,0)=0$. The map $F_T\circ \Psi^{-1}$, in yellow, has constant value $(\pi/4,0)$, so that $\deg(F_T\circ\Psi^{-1},U_0,0)=\mydeg(F_T,U_0,0)=0$ and $\deg(F_T\circ\Psi^{-1},U_1,0)=\mydeg(F_T,U_1,0)=0$. Notice, in particular, that $\Sigma(F_T,\partial U_0)=\Sigma(F_T,\partial U_1)=\emptyset$.}
	\label{fig:zero}
\end{figure}

\begin{proof} 
	We consider first the case $0_{\R^2}\in U$. In such a case, there exists a positive radius $r_0>$ such that $\clos{B_{r_0}}\subset U$. We define $A_0$ as the (generalized) annulus $A_0=\clos U\setminus B_{r_0}$, write $\As_0=\Psi^{-1}(A_0)$ and recall that $\clos{\BB_{r_0}}=\R\times [0,r_0]$. We also introduce the compact interval $I_\mathrm{rot}$ as
	\begin{equation*}
		I_\mathrm{rot}:=\begin{cases}
			[j-\frac{1}{2},j+\frac{1}{2}] &\text{if $\conv\rot(F_T,\partial U)= \Sigma(F_T,\partial U)=\{j\}$}\subset\Z \,;\\
			\conv\rot(F_T,\partial U) &\text{otherwise} \,.
		\end{cases}
	\end{equation*}
	
	We claim that there exists a continuous function $\widetilde F\colon \clos{\Us}\to \R^2$ satisfying	
	\begin{enumerate}[label=\textup{\alph*)}]
		\item $\widetilde F(\theta,r)= F_T(\theta, r)$ for every $(\theta,r)\in \partial \Us$; \label{cond:Fext1}
		\item $\widetilde F(\theta,r)$ is $2\pi$-periodic in $\theta$;  \label{cond:Fext2}
		\item $\rot(\widetilde{F},U)\subseteq I_\mathrm{rot}$; \label{cond:Fext3}
		\item there exists a positive constant $c_F>0$ such that $\widetilde F^r(\theta,r)+r\geq c_F$ for every $(\theta,r)\in \clos{\As_0}$; \label{cond:Fext4}
	\item $\widetilde F(\theta,r)=(2\pi \alpha,0)$ for every $(\theta,r)\in \clos{\BB_{r_0}}$, where  $\alpha\in \R$ satisfies $\alpha\in I_\mathrm{rot}\setminus \Sigma(F_T,\partial U)$. \label{cond:Fext5}
\end{enumerate}
The existence of such a function is a consequence of Tietze's extension Theorem. More precisely, we extend from the closed subset  $\partial A_0\subset A_0$ to $A_0$ the function $x=\Psi(\theta,r)\mapsto \widetilde F(\theta,r)+(0,r)$, defined using \ref{cond:Fext1} and \ref{cond:Fext5}.  Properties \ref{cond:Fext3} and \ref{cond:Fext4} follow from the fact that the extension may be taken with the same bounds of the function on $\partial A_0$; in particular notice that we extend $\widetilde F(\theta,r)+(0,r)$ instead of $\widetilde F$ in order to preserve the bound \ref{cond:Fext4}, which holds by construction on $\partial A_0$. The periodicity \ref{cond:Fext2} comes from the fact that we performed the extension on $\clos U$ instead of $\clos \Us$. 

Let us now define the continuous functions $\widetilde P\colon \clos \Us\to \clos\Hs$,
$\tilde p\colon \clos U \to \R^2$, $\tilde f\colon \clos U \to \R^2$  as
\begin{align*}
	\widetilde P(y)=y+\widetilde F(y)\,, && \widetilde p(x)=\Psi \circ \widetilde{P} \circ \Psi^{-1}(x)\,, && \tilde f(x)=\tilde p(x)-x \,.
\end{align*}
Notice that $\tilde p$ is well-defined by \ref{cond:Fext2}; moreover for every $x\in \partial U$ we have $\tilde p(x)=\phi(T,x)$, and therefore $\tilde f(x)=f_T(x)$. 

We also observe that $\tilde f$ is continuous, $\tilde f(x)\neq 0$ for every $x\in \clos{B_{r_0}}\setminus\{0_{\R^2}\}$ and, by \ref{cond:Fext5}, $\tilde p(\clos{B_{r_0}})\subseteq\clos{B_{r_0}}$. Hence, by Proposition \ref{prop:gradopalla}, we deduce  that $\deg(\tilde f,B_{r_0},0)=1$.
Therefore, by the additivity property of the degree, we obtain
\begin{align}\label{eq:mydegdec0}
	\deg(f_T,U,0)&=\deg(\tilde f,U,0)=\deg(\tilde f,\inter A_0,0)+\deg(\tilde f,B_{r_0},0)\notag\\
	&=\deg(\tilde f,\inter A_0,0)+1 \,.
\end{align}

We now compute $\deg(\tilde f,\inter A_0,0)$.
We claim that, for every $\epsilon>0$, there exists $\hat f\in\CC^1(A_0,\R^2)$ such that
\begin{itemize}
	\item $0$ is a regular value for $\hat f$; in particular $\hat f^{-1}(0)$ consists of a finite number of isolated points;
	\item $\abs{\hat f -\tilde{f}}<\epsilon$ uniformly on $A_0$.
\end{itemize} 
This follows from the approximability of continuous function by differentiable functions, combined with Sard's Lemma, see for instance \cite[Prop.~1.19]{Fons}. The fact that $\hat f^{-1}(0)$ is finite follows by compactness, cf.~\cite[Exer.~1.2]{Fons}  We remark that the existence of such an $\hat f$ is the key point of one of the classical constructions of Brouwer's degree for a continuous function, cf.~for instance \cite{Deim,Fons,Nag}.

We set $\epsilon<\min \{\lvert\tilde f(x)\rvert, x\in\partial A_0\}$ so that 
\begin{equation}\label{eq:homot_hatf}
 (1-\lambda)\tilde f(x)+\lambda \hat f(x)\neq 0\qquad \text{for every $(\lambda,x)\in  [0,1]\times\partial A_0$}\,, 
\end{equation}
which implies $\deg(\hat f,\inter A_0,0)=\deg(\tilde f,\inter A_0,0)$.
Moreover,  by \ref{cond:Fext4} we deduce that there exists $\tilde \epsilon>0$ such that $\abs{\tilde{p}(x)}>2\tilde \epsilon$ for every $x\in A_0$. We also set $\epsilon<\tilde \epsilon$, so that the homotopy $h\in\CC([0,1]\times A_0,\R^2)$ defined as
\begin{equation*}
h(\lambda,x):=(1-\lambda)\tilde{p}(x)+\lambda \hat p(x)
\end{equation*}
satisfies $\abs{h(\lambda,x)}>\tilde{\epsilon}$ for every $(\lambda,x)\in [0,1]\times A_0$. We define $H\colon [0,1]\times \As_0\to \Hs$  as the unique homotopy such that $H(0,\cdot)=\widetilde{P}(\cdot)$ and
\begin{equation*}
\Psi (H(\lambda,y))= h(\lambda,\Psi(y)) \quad \text{for every $(\lambda,y)\in [0,1]\times \As_0$\,.}
\end{equation*}
The wellposedness of $H$ can be verified by a simple adaptation of \cite[Theorem 11]{GidODE} obtained setting there $g(\tau,\eta):=h(\tau,\Psi(\eta))$.  By \eqref{eq:homot_hatf} we deduce that $H(\lambda,y)\neq y+\nu_i$ for every $y\in \partial \As_0$, $\lambda\in[0,1]$ and $i\in \Z$. We also denote $\widehat{P}:=H(1,\cdot)$ and $\widehat F:=\widehat{P}-\I$.

Let us pick any $y_x\in \Psi^{-1}(x)$. We observe that the following statements are equivalent:
\begin{itemize}
	\item $x\in \inter A_0$ is a zero of $\hat{f}$\,;
	\item $x\in \inter A_0$ is a fixed point of $\hat{p}$\,;
	\item $y_x\in \inter A_0$ is a fixed point of $\widehat{P}-\nu_{i_x}$, for a (unique) index $i_x\in \Z$\,;
	\item   $y_x\in \inter A_0$ satisfies $\widehat{F}(y_x)=\nu_{i_x}$, for the same index $i_x\in \Z$\,;
	\item $x\in \inter A_0$ satisfies $\widehat{F}\circ \Psi^{-1}(x)=\nu_{i_x}$, for the same index $i_x\in \Z$ \,.
\end{itemize} 

If $x$ is an isolated zero for $\hat f$, then there exists two neighbourhoods $V_x\subseteq W\subseteq \inter A_0$ such that $x$ is the only  zero for $\hat f$ in $V_x$ and $\hat p(V_x)\subseteq W$. Moreover, without loss of generality, we can take $V_x,W$ sufficiently small such that, for a suitable choice of a parameter $a$, there exists a neighbourhood $\Ws\subset \inter \As_0\cap \bigl((a,a+2\pi)\times(0,+\infty)\bigr)$ such that $\Psi\rvert_\Ws\colon \Ws\to W$ is a homeomorphism between $\Ws$ and $W$. Let us set $y_x=\Psi\rvert_\Ws^{-1}(x)$.

We claim that 
\begin{equation} \label{eq:fixedpointdeg}
	\deg(\hat f,V_x,0)=\deg(\widehat{F}\circ \Psi^{-1},V_x,\nu_{i_x}) \,.
\end{equation}
Indeed, recalling that $\ind(p,x)$ denotes the fixed-point index, we have
\begin{align*}
\deg(\hat f,V_x,0)=\ind (\hat p, x) \,, && \deg(\widehat F,\Psi\rvert_\Ws^{-1}(V_x),\nu_{i_x})=\ind(\widehat{P}-\nu_{i_x},y_x) \,.
\end{align*}
Please notice that we are using the fact that we are in dimension two, since the fixed point index of a map $p$ is more properly associated to the degree of $\I-p$. Since $\widehat{P}-\nu_{i_x}= \Psi\rvert_\Ws^{-1}\circ \hat p \circ \Psi\rvert_\Ws$ on $\Ws$, by the commutativity  of the fixed-point index (see, e.g., \cite[Ch.~12]{Dug}) we obtain
\begin{equation*}
\ind (\hat p, x)=\ind (\Psi\rvert_\Ws\circ\Psi\rvert_\Ws^{-1} \circ \hat{p}  , x)=\ind (\Psi\rvert_\Ws^{-1} \circ \hat{p} \circ \Psi\rvert_\Ws, y_x)=\ind(\widehat{P}-\nu_{i_x},y_x) \,.
\end{equation*}
By the multiplication formula for the degree (cf., e.g., \cite[Theorem 2.10]{Fons}), using also the fact that $\Psi\rvert_\Ws$ is an orientation preserving homeomorphism, we deduce
\begin{align*}
\deg(\widehat F,\Psi\rvert_\Ws^{-1}(V_x),\nu_{i_x})=\deg(\widehat F\circ\Psi\rvert_\Ws^{-1},V_x,\nu_{i_x}) \,.
\end{align*}
Since $\widehat F\circ\Psi\rvert_\Ws^{-1}$ and $\widehat F\circ\Psi^{-1}$ coincide on $V_x$, connecting the previous three equalities we obtain \eqref{eq:fixedpointdeg}.

By the additivity property of the degree and the properties of $\hat f$, we know that
\begin{equation}\label{eq:proofdeg0}
\deg(\tilde f,\inter A_0,0)=	\deg(\hat f,\inter A_0,0)=\sum_{\substack{x\in\inter A_0\\ \hat f(x)=0}}\deg (\hat{f},V_x,0)
\end{equation}
where the isolating neighbourhoods $V_x$ are defined as above. By this and \eqref{eq:fixedpointdeg}  we deduce
\begin{align}
\sum_{\substack{x\in\inter A_0\\ \hat f(x)=0}}\deg (\hat{f},V_x,0) &=\sum_{i\in\Z}\sum_{\substack{x\in\inter A_0\\ \widehat F\circ\Psi^{-1}(x)=\nu_i}} \deg(\widehat F\circ\Psi^{-1},V_x,\nu_{i}) \notag\\
&=\sum_{i\in\Z} \deg(\widehat F\circ\Psi^{-1},\inter A_0,\nu_{i})\label{eq:proofdeg1} \,.
\end{align}
We also know that, for every $i\in \Z$,
\begin{equation}\label{eq:proofdeg2}
\deg(\widehat F\circ\Psi^{-1},\inter A_0,\nu_{i})=\deg(\widetilde F\circ\Psi^{-1},\inter A_0,\nu_{i})=\deg(\widetilde F\circ\Psi^{-1},U,\nu_{i}) \,.
\end{equation}
Indeed, since by \ref{cond:Fext5} the function $\widetilde F\circ\Psi^{-1}$ is constant on $\clos{B_{r_0}}$ with a value different from any $\nu_i$, we have $\deg(\widetilde F\circ\Psi^{-1},B_{r_0},\nu_{i})=0$, so that the last equality in \eqref{eq:proofdeg2} is obtained by the additivity of the degree.

By \eqref{eq:mydegdec0}, \eqref{eq:proofdeg0}, \eqref{eq:proofdeg1}, \eqref{eq:proofdeg2}, and Definition \ref{def:mydeg} we obtain
\begin{equation}
\deg(f_T,U,0)=1+\sum_{i\in\Z}\mydeg(F_T,U,\nu_i) \,.
\end{equation}
To obtain \eqref{eq:decomp}, we simply notice that, by Lemma \ref{lemma:mydegrot}, we deduce that 
\begin{equation*}
	\mydeg(F_T,U,\nu_i)=0 \qquad\text{for every $i\notin\Sigma(F_T, U)$} \,.
\end{equation*}
The proof in the case $0_{\R^2}\in U$ is thus concluded.

\medskip
Let us now assume that $0_{\R^2}\notin\clos U$. The proof is the same as in the previous case, but with a simplification in the argument.  The key point is that there is no longer the need to build the annulus $A_0$, but we can work directly on $\clos U$ since the origin is already excluded. Hence, in the construction of $\tilde f$ we require only \ref{cond:Fext1},\ref{cond:Fext2},\ref{cond:Fext3} and 
 	\begin{enumerate}[label=\textup{\alph*')}, start=4]
	\item there exists a positive constant $c_F>0$ such that $\widetilde F^r(\theta,r)+r\geq c_F$ for every $(\theta,r)\in \clos\Us$; \label{cond:Fext4p}
\end{enumerate}
while instead of the second part of \eqref{eq:mydegdec0} we just have $\deg(f_T,U,0)=\deg(\tilde f,U,0)$, since we miss the term $+1$ produced by the neighbourhood of the origin. From \eqref{eq:mydegdec0} onwards, the proof is exactly the same as above, replacing $\inter A_0$ with $U$ in each occurrence.
\end{proof}

%%%%%%%%%%%%%%%%%%%%%%%%%%%%%%%%%%%%%%%%%%%%%%%%%%%%%%%%%%%%%%%%%%%%%%%%%%%%

\section{Fixed-point theorems for planar system with rotational or twist properties}\label{sec:fixedpoint}

In this section we propose some fixed point theorems that can be obtained using the degree~$\mydeg$.
The first one is a direct corollary of Theorem \ref{th:degree}.
\begin{theorem} \label{th:deg1}
	Let us consider an open bounded set $U\subset\R^2$ such that $0_{\R^2}\in U$. Let  $\phi\colon[0,T] \times\clos U\to\R^2$ be a continuous evolution map  such that $\Null\cap \partial U=\emptyset$ and define the maps $f_T\colon\clos{U}\to\R^2$ and $F_T\colon\partial\Us\to\R^2$ as in Section \ref{sec:flow}. 
	
	If $\Sigma(F_T,\partial U)=\emptyset$, then $\deg(f_T,U,0)=1$, and thus $\phi_T$ has a fixed point in~$U$.
\end{theorem}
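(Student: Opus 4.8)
The plan is to obtain the conclusion as an essentially immediate reading of the decomposition formula \eqref{eq:decomp} in Theorem \ref{th:degree}, after a brief preliminary check that every object involved is well defined.

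First I would verify that $0_{\R^2}\notin f_T(\partial U)$, so that $\deg(f_T,U,0)$ is meaningful. Suppose, towards a contradiction, that some $x\in\partial U$ is a zero of $f_T$, i.e.\ a fixed point of $\phi_T$. As remarked right after the Definition of rotation, the rotation of a fixed point is always an integer, so $\rot(F_T,x)\in\Z$. On the other hand $\rot(F_T,x)\in\rot(F_T,\partial U)\subseteq\conv\rot(F_T,\partial U)$, whence
\[
\rot(F_T,x)\in\Z\cap\conv\rot(F_T,\partial U)=\Sigma(F_T,\partial U),
\]
contradicting the hypothesis $\Sigma(F_T,\partial U)=\emptyset$. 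Therefore $f_T$ has no zero on $\partial U$, the Brouwer degree $\deg(f_T,U,0)$ is well defined, and the hypotheses of Theorem \ref{th:degree} are met.

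Next I would invoke Theorem \ref{th:degree} in the case $0_{\R^2}\in U$, which holds by assumption together with $\Null\cap\partial U=\emptyset$ and \eqref{cond:flow}. Formula \eqref{eq:decomp} then reads
\[
\deg(f_T,U,0)=1+\sum_{i\in\Sigma(F_T,\partial U)}\mydeg(F_T,U,\nu_i).
\]
Since $\Sigma(F_T,\partial U)=\emptyset$, the sum over $i$ is empty and we conclude $\deg(f_T,U,0)=1$. Because this degree is nonzero, the existence (solution) property of Brouwer's degree yields a point $x\in U$ with $f_T(x)=0$, that is $\phi_T(x)=x$, which is the asserted fixed point.

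I do not expect a genuine obstacle here: once Theorem \ref{th:degree} is available, the statement is an almost direct corollary. The only point deserving care is the preliminary observation that the emptiness of $\Sigma(F_T,\partial U)$ already excludes fixed points on $\partial U$ — needed both to make $\deg(f_T,U,0)$ well defined and, implicitly, to apply the decomposition formula. Everything else is a routine combination of \eqref{eq:decomp} with the standard nonvanishing-degree existence principle.
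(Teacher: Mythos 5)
Your argument is correct and coincides with the paper's own (the paper states Theorem \ref{th:deg1} as a direct corollary of Theorem \ref{th:degree} and gives no further detail): the empty sum in \eqref{eq:decomp} immediately yields $\deg(f_T,U,0)=1$, and the existence property of the degree gives the fixed point. Your preliminary observation that $\Sigma(F_T,\partial U)=\emptyset$ rules out fixed points of $\phi_T$ on $\partial U$, so that all the degrees involved are well defined, is a worthwhile addition that the paper leaves implicit.
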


As a direct consequence of Theorem \ref{th:deg1}, we observe that, if $\deg(f_T,U,0)\neq1$, then  $\Sigma(F_T,\partial U)\neq\emptyset$.

As we show in Theorem \ref{th:GidODE}, Theorem \ref{th:deg1} can be used to prove the existence of a periodic solution for planar systems whose solutions, starting at a sufficiently large radius, make an arbitrary large number of rotations.

For the next theorems, we will consider twist on a generalized annulus  $A=\clos{\out U}\setminus \inn U$.

\begin{theorem}\label{th:main} Let  $\phi\colon[0,T] \times A\to\R^2$ be a continuous evolution map  such that $\Null\cap \partial A=\emptyset$ and define the maps $f_T\colon\clos{U}\to\R^2$ and $F_T\colon\partial\Us\to\R^2$ as in Section \ref{sec:flow}. 	
Suppose that the twist condition 
\begin{equation}\label{eq:twist}
\Sigma(F_T,\partial \out U)\cap \Sigma(F_T,\partial \inn U)=\emptyset
\end{equation}
is satisfied. Then
\begin{itemize}
	\item if $\deg(f_T,\inn U,0)\neq 1$, then there exists a fixed point $x\in A$ of $\phi_T$ such that $F_T(x)=\nu_i$ for some $i\in \Sigma(F_T,\partial \inn U)$.
	\item if $\deg(f_T,\out U,0)\neq 1$, then there exists a fixed point $x\in A$ of $\phi_T$ such that $F_T(x)=\nu_i$ for some $i\in \Sigma(F_T,\partial \out U)$;
\end{itemize}
\end{theorem}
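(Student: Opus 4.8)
The plan is to deduce Theorem \ref{th:main} from the characterization provided by Theorem \ref{th:degree}, together with the additivity property for $\mydeg$ (Corollary \ref{corol:annulus_degree}) and the fact that a nonzero $\mydeg(F_T,\cdot,\nu_i)$ forces the existence of a fixed point with rotation $i$. I will treat the two bullets symmetrically, so let me describe the argument for the inner boundary; the outer case is identical after swapping the roles of $\inn U$ and $\out U$.

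First I would apply Theorem \ref{th:degree} to the set $\inn U$ (which contains the origin). Since $\deg(f_T,\inn U,0)\neq 1$ by hypothesis, the decomposition \eqref{eq:decomp} gives
\begin{equation*}
	\sum_{i\in\Sigma(F_T,\partial\inn U)}\mydeg(F_T,\inn U,\nu_i)=\deg(f_T,\inn U,0)-1\neq 0 \,.
\end{equation*}
Hence at least one summand is nonzero: there exists $i_0\in\Sigma(F_T,\partial\inn U)$ with $\mydeg(F_T,\inn U,\nu_{i_0})\neq 0$. The next step is to transfer this nonvanishing degree from $\inn U$ to the full disk $\out U$. Here the twist condition \eqref{eq:twist} is essential: since $i_0\in\Sigma(F_T,\partial\inn U)$ and the two $\Sigma$-sets are disjoint, we have $i_0\notin\Sigma(F_T,\partial\out U)$, so Lemma \ref{lemma:mydegrot} yields $\mydeg(F_T,\out U,\nu_{i_0})=0$. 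By additivity (Corollary \ref{corol:annulus_degree}),
\begin{equation*}
	\deg(F_T\circ\Psi^{-1},A,\nu_{i_0})=\mydeg(F_T,\out U,\nu_{i_0})-\mydeg(F_T,\inn U,\nu_{i_0})=-\mydeg(F_T,\inn U,\nu_{i_0})\neq 0 \,.
\end{equation*}

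Finally I would invoke the solution property of the degree. A nonzero value of $\deg(F_T\circ\Psi^{-1},A,\nu_{i_0})$ means $\nu_{i_0}$ is attained: there is a point $y\in\As$ with $F_T(y)=\nu_{i_0}=(2\pi i_0,0)$. Unwinding the definition of $F_T=\Phi_T-\I$, the vanishing radial component says $\Phi_T(y)$ and $y$ share the same $r$-coordinate, while the angular component equals $2\pi i_0$; projecting by $\Psi$ gives $\phi_T(x)=x$ for $x=\Psi(y)\in A$, a genuine fixed point, whose rotation is $\rot(F_T,x)=i_0\in\Sigma(F_T,\partial\inn U)$ as claimed. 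The main obstacle, and the only place requiring care, is the transfer step: one must check that the degree $\deg(F_T\circ\Psi^{-1},A,\nu_{i_0})$ is well defined, i.e.\ that $\nu_{i_0}\notin F_T\circ\Psi^{-1}(\partial A)$. This follows precisely because $i_0\notin\Sigma(F_T,\partial\out U)$ forces $\nu_{i_0}\notin F_T(\partial\out\Us)$ by the definition of $\Sigma$, while $i_0$ being attained on $\partial\inn U$ would already give a boundary fixed point (a degenerate but harmless case one disposes of separately). Thus the twist condition does double duty: it guarantees both the admissibility of the annulus degree and the transfer of nonvanishing $\mydeg$ from one boundary to the interior of $A$.
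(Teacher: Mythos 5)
Your argument is correct and follows essentially the same route as the paper: Theorem \ref{th:degree} (the paper nominally cites Theorem \ref{th:deg1}, but the substance is the decomposition \eqref{eq:decomp}) to extract an $i_0$ with $\mydeg(F_T,\inn U,\nu_{i_0})\neq 0$, then Lemma \ref{lemma:mydegrot} via the twist condition to kill $\mydeg(F_T,\out U,\nu_{i_0})$, and Corollary \ref{corol:annulus_degree} to conclude a nonzero degree on the annulus. Your extra remark on the admissibility of the annulus degree (and the harmless boundary fixed-point case) is a reasonable point of care that the paper leaves implicit.
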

\begin{proof}
	Let us assume that $\deg(f_T,\inn U,0)\neq 1$. Then, by Theorem \ref{th:deg1}  there exists $i\in \Sigma(F_T,\partial \inn U)$ such that $\mydeg(F_T, \inn U,\nu_i)\neq 0$. On the other hand, by \eqref{eq:twist} and Lemma \ref{lemma:mydegrot}, we deduce that $\mydeg(F_T, \out U,\nu_i)=0$. By Corollary \ref{corol:annulus_degree} we deduce
	\begin{equation*}
		\deg(F_T\circ \Psi^{-1},A,\nu_i)\neq 0
	\end{equation*}
	which implies the existence of a fixed point $x\in A$ of $\phi_T$ such that $F_T(x)=\nu_i$.
	
	The case $\deg(f_T,\out U,0)\neq 1$ is proved analogously.
\end{proof}

Using a result of Simon \cite{Sim}, stating that the fixed-point index of an isolated fixed point of an area-preserving planar map is less or equal to one, we have the following corollary which can be applied, for instance, to the evolution maps of Hamiltonian systems.

\begin{corol} \label{corol:areapres}
	In the framework of Theorem \ref{th:main}, suppose in addition that the evolution map $\phi$ is an area-preserving map.  Then
	\begin{itemize}
		\item if $\deg(f_T,\out U,0)=h>1$, then there exist at least $h-1$ fixed points $x_l\in A$ of $\phi_T$ such that $F_T(x_l)=\nu_i$ for some $i\in \Sigma(F_T,\partial \out U)$;
		\item if $\deg(f_T,\inn U,0)=-k< 1$, then there exist at least $k+1$ fixed points $x_l\in A$ of $\phi_T$ such that $F_T(x)=\nu_i$ for some $i\in \Sigma(F_T,\partial \inn U)$.
	\end{itemize} 
\end{corol}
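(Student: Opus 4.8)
The plan is to turn each statement into a counting argument: I would write the relevant Brouwer degree as a signed sum of fixed-point indices of $\phi_T$ and then invoke Simon's bound \cite{Sim} to convert this signed count into a lower bound on the number of fixed points. The two items are handled symmetrically, so I would carry out the case $\deg(f_T,\out U,0)=h>1$ in full and only indicate the sign changes for the other. First I would dispose of the degenerate situation: if there are infinitely many fixed points $x\in A$ with $F_T(x)=\nu_i$ for some $i\in\Sigma(F_T,\partial\out U)$, the assertion (at least $h-1$ of them) is automatic, so I may assume there are finitely many and hence that each is isolated. None of these can lie on $\partial A$, since $\deg(f_T,\out U,0)$ being defined excludes fixed points on $\partial\out U$, while the twist condition \eqref{eq:twist} gives $i\notin\conv\rot(F_T,\partial\inn U)$, hence $\nu_i\notin F_T(\partial\inn\Us)$, for every $i\in\Sigma(F_T,\partial\out U)$.

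Next I would compute the total degree. Applying Theorem \ref{th:degree} to $\out U$ (recall $0_{\R^2}\in\inn U\subset\out U$) gives
\[
 h=1+\sum_{i\in\Sigma(F_T,\partial\out U)}\mydeg(F_T,\out U,\nu_i).
\]
For each $i\in\Sigma(F_T,\partial\out U)$ the twist condition forces $i\notin\Sigma(F_T,\partial\inn U)$, so Lemma \ref{lemma:mydegrot} yields $\mydeg(F_T,\inn U,\nu_i)=0$ and Corollary \ref{corol:annulus_degree} collapses to $\deg(F_T\circ\Psi^{-1},A,\nu_i)=\mydeg(F_T,\out U,\nu_i)$. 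Summing over $i$ therefore gives $\sum_i\deg(F_T\circ\Psi^{-1},A,\nu_i)=h-1$.

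The core of the argument is to identify each annular degree with a sum of fixed-point indices. Exactly as in the passage around \eqref{eq:fixedpointdeg} in the proof of Theorem \ref{th:degree}, for an isolated fixed point $x$ of $\phi_T$ with $F_T(x)=\nu_i$ the local degree of $F_T\circ\Psi^{-1}$ at $\nu_i$ on a small isolating neighbourhood of $x$ equals $\ind(\phi_T,x)$, using that $\Psi$ is an orientation-preserving local homeomorphism together with the commutativity and multiplicativity of the index. Summing the local contributions over the finitely many relevant fixed points I obtain
\[
 h-1=\sum_{\substack{x\in A,\ \phi_T(x)=x\\ F_T(x)=\nu_i,\ i\in\Sigma(F_T,\partial\out U)}}\ind(\phi_T,x).
\]
Since $\phi_T$ is area-preserving, Simon's theorem \cite{Sim} gives $\ind(\phi_T,x)\le 1$ for every term, so if there are $N$ such fixed points the sum is at most $N$; hence $N\ge h-1$, which is the first item.

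For the second item the scheme is identical with the signs tracked. Applying Theorem \ref{th:degree} to $\inn U$ gives $-k=1+\sum_{i\in\Sigma(F_T,\partial\inn U)}\mydeg(F_T,\inn U,\nu_i)$; now the twist condition kills $\mydeg(F_T,\out U,\nu_i)$, so Corollary \ref{corol:annulus_degree} gives $\deg(F_T\circ\Psi^{-1},A,\nu_i)=-\mydeg(F_T,\inn U,\nu_i)$ and summation yields $\sum_i\deg(F_T\circ\Psi^{-1},A,\nu_i)=k+1$. The same index identification and Simon's bound then produce at least $k+1$ fixed points with rotation in $\Sigma(F_T,\partial\inn U)$. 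The step I expect to demand the most care is the local identity $\deg(F_T\circ\Psi^{-1},V_x,\nu_i)=\ind(\phi_T,x)$ for the genuine map $\phi_T$, rather than for the smooth approximation used inside the proof of Theorem \ref{th:degree}, together with the preliminary reduction to finitely many (hence isolated) fixed points that makes Simon's index bound applicable at all.
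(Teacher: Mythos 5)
Your proposal is correct and follows essentially the same route as the paper: reduce to the annular degrees $\deg(F_T\circ\Psi^{-1},A,\nu_i)$ via Theorem \ref{th:degree}, the twist condition, Lemma \ref{lemma:mydegrot} and Corollary \ref{corol:annulus_degree}, then convert each positive annular degree into a count of fixed points using the local index identification from the proof of Theorem \ref{th:degree} together with Simon's bound $\ind(\phi_T,x)\le 1$. The paper states this very tersely (``the proof proceeds as that of Theorem \ref{th:main}, the multiplicity follows from \cite{Sim}''); your write-up merely makes explicit the finiteness reduction and the identification of the annular degree with a sum of fixed-point indices, which is exactly what the paper's argument implicitly relies on.
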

\begin{proof}
The proof proceed as that of Theorem \ref{th:main}. The multiplicity follows by combining the result of \cite[Prop.~1]{Sim} recalled above, so that $\deg(F_T\circ \Psi^{-1},A,\nu_i)=n>0$ implies the existence of at least $n$ geometrically distinct fixed points of $\Phi_T-\nu_i$.
\end{proof}

\begin{remark}
	The key point of our approach is to find some open bounded set $U$ and $i\in \Z$ such that $\mydeg(F_T, U,\nu_i)\neq 0$. The most direct way to recover such a set is to find $U$ such that
	$\deg(f_T,U,0)\neq 1$ and apply Theorem \ref{th:degree}.  If instead we have $\deg(f_T,U,0)= 1$ our target becomes much more unlikely. Firstly, from Theorem \ref{th:deg1}, we know that all the unfavourable situations with $\Sigma(F,\partial U)=\emptyset$ fall in the $\deg(f_T,U,0)= 1$ case. Secondly, if $\Sigma(F,r)=\{j\}$, then $\deg(f_T,U,0)= 1$ implies, by Theorem \ref{th:degree}, that $\mydeg(F_T, U,\nu_j)= 0$. Hence, when the degree is one, favourable situations may appear only if $\Sigma(F,\partial U)$ has at least two elements. Even in such a case, it is not automatic to have $\mydeg(F_T, U,\nu_i)\neq 0$, since all the degrees $\mydeg(F_T, U,\nu_i)$ may be zero. For instance, we can consider the map $\Phi(t,(\theta,r))=(\theta+q(\theta)t,r+q(\theta)t)$ where $q(\theta)=2+\sin\theta$. This means that, in the plane, the orbits are the arms of a symmetric spiral, but along some arms the system evolves faster. For $T$ sufficiently large we may obtain an arbitrarily large $\Sigma(F,\partial U)$, but by a symmetry argument one may observe that $\mydeg(F_T, U,\nu_i)= 0$ for every $i$.
\end{remark}

To conclude this section, we present present a last abstract theorem, which generalizes in the planar case a fixed-point Theorem by W.Y. Ding \cite{Ding}, cf.~also \cite{CheRen,CheRen2} for applications and \cite{LQ20} for an extension in higher dimension of that result.

\begin{theorem} \label{th:ding} Let  $\phi\colon[0,T] \times\clos{\out U}\to\R^2$ be a continuous evolution map  such that $\Null\cap \partial A=\emptyset$ and define the maps $f_T\colon\clos{\out U}\to\R^2$ and $F_T\colon\partial\As\to\R^2$ as in Section~\ref{sec:flow}.
Assume that the twist condition \eqref{eq:twist} holds. Then $\phi_T$ has a fixed point in $\clos{\out U}$.
\end{theorem}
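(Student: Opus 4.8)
The plan is to obtain the fixed point from the two tools already established, namely the twist Theorem \ref{th:main} and the classical solution property of Brouwer's degree, organized around a dichotomy on the value of $\deg(f_T,\out U,0)$. The conceptual point is that the single value $\deg(f_T,\out U,0)=1$, which is excluded from the hypotheses of Theorem \ref{th:main}, is precisely the value for which a fixed point is furnished directly by the non\-vanishing of the degree on the full disk.

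First I would dispose of boundary fixed points. If $\phi_T$ admits a fixed point on $\partial A=\partial\out U\cup\partial\inn U$, this point already lies in $\clos{\out U}$ and there is nothing left to prove. Hence I may assume $0_{\R^2}\notin f_T(\partial\out U)$ and $0_{\R^2}\notin f_T(\partial\inn U)$. This makes $\deg(f_T,\out U,0)$ well defined, and moreover guarantees the admissibility needed to run the argument of Theorem \ref{th:main}: indeed $\nu_i\in F_T(\partial\out\Us)$ (resp.\ $\partial\inn\Us$) would force a fixed point of $\phi_T$ on $\partial\out U$ (resp.\ $\partial\inn U$), so Corollary \ref{corol:annulus_degree} applies to $\nu_i$.

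Next I would split according to the value of $\deg(f_T,\out U,0)$. If $\deg(f_T,\out U,0)\neq 1$, then, since the twist condition \eqref{eq:twist} is in force, I invoke the $\out U$ branch of Theorem \ref{th:main} to produce a fixed point $x\in A\subseteq\clos{\out U}$. If instead $\deg(f_T,\out U,0)=1$, then in particular $\deg(f_T,\out U,0)\neq 0$, and the solution property of Brouwer's degree yields a zero of $f_T$ in $\out U$, that is, a fixed point of $\phi_T$ in $\out U\subseteq\clos{\out U}$. In either case a fixed point of $\phi_T$ is located in $\clos{\out U}$, as claimed.

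I do not expect a genuine obstacle, as the substantive work is carried by Theorem \ref{th:main}; the only delicate point, and the heart of the statement, is the case $\deg(f_T,\out U,0)=1$. There the twist theorem is silent, yet the mere fact that the degree on the full disk is nonzero recovers the fixed point. This also explains why the conclusion can be localized only in $\clos{\out U}$ and not in the annulus $A$, consistently with the discussion preceding the statement: dissipative or expansive flows having the origin as unique fixed point realize exactly this degenerate value of the degree, and for them no solution in $A$ need exist.
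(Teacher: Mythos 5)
Your proof is correct and uses exactly the same two ingredients as the paper's own argument: the twist Theorem \ref{th:main} and the solution property of Brouwer's degree, after first disposing of fixed points on $\partial A$ (which is how the paper handles the ``degree not defined'' case). The only cosmetic difference is the organization of the dichotomy --- you split on $\deg(f_T,\out U,0)=1$ versus $\neq 1$, while the paper splits on the degrees being zero versus nonzero --- but the two case analyses cover the same ground with the same tools, so this is essentially the paper's proof.
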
	

\begin{proof}
We proceed by cases:
\begin{itemize}
	\item if $\deg(f_T,\out U,0)$ or  $\deg(f_T,\inn U,0)$ is not defined, it means that we have the desired period solution starting respectively from $\partial \out U$ or $\partial \inn U$;
	\item if $\deg(f_T,\out U,0)\neq 0$ or  $\deg(f_T,\inn U,0)\neq 0$, it is a trivial application of degree theory;
	\item if $\deg(f_T,\out U,0)=\deg(f_T,\inn U,0)=0$, then $\Sigma(F,\partial\out{U})$ and $\Sigma(F,\partial \inn U)$ are both non-empty.  Since the twist condition \eqref{eq:twist} holds, we apply Theorem \ref{th:main}, recovering two fixed points for $\phi_T$. 
\end{itemize}\vspace{-4mm}
\end{proof}
Notice that twist alone, as in Theorem \ref{th:ding}, is not sufficient to provide existence of a fixed point in the annulus, as happens instead in Theorem \ref{th:main}. Consider for instance the evolution described, in polar-like coordinates, by the map $\Phi(t,(\theta,r))=(\theta +rt,r(1+t))$. The twist condition \eqref{eq:twist} is satisfied when the annulus is the difference of two balls $B_r$ with different radius, but the only fixed point for $\phi_T$ is the origin $0_{\R^2}$. Moreover, it is easy to verify that in this case $\deg (f_T,B_r,0)=1$ for every $r>0$, hence, as expected, we cannot apply Theorem \ref{th:main}.

%%%%%%%%%%%%%%%%%%%%%%%%%%%%%%%%%%%%%%%%%%%%%%%%%%%%%%%%%%%

%%%%%%%%%%%%%%%%%%%%%%%%%%%%%%%%%%%%%%%%%%%%%%%%%%%%%%%%%%%%%%%%%%%%%

\section{Applications and examples} \label{sec:appl}
For simplicity, in this section we choose as polar-like coordinates $\Psi$  the usual clockwise polar coordinates, namely
\begin{equation}\label{eq:Fpolar}
	\Psi(\theta,r)=(r\cos\theta, -r \sin \theta) \,.
\end{equation}
We denote with $\scal{z_a}{z_b}$ the scalar product of two vectors $z_a,z_b\in\R^2$. Given a smooth function $h(t,z)\colon [0,T]\times\R^2\to\R^2$, we write its differential in the $z$ variables as $D_zh$.

\subsection{Asymptotically linear planar systems} \label{sec:asymp_lin}
A classical situation in which the relevant rotational and degree properties can be easily studied is when the dynamics is produced by a system $\dot z=h(t,z)$ asymptotically linear at the origin or at infinity. First, we recall the following two lemmata, relating the topological properties of the system to those of its linearizations, cf.~\cite[Lemmata 1, 2 and 3]{MaReZa}. Such results are stated for a planar system $\dot z=h(t,z)$, where $h\colon\R\times \R^2\to\R^2$ is continuous, $T$-periodic in $t$ and continuously differentiable in $z$. We recall that a linear system is said to be $T$-nonresonant if it does not admit any nontrivial $T$-periodic solution.

\begin{lemma}\label{lem:asymp_zero}
	Assume that $h(t,0)=0$ and $D_z h(t,z)=L_0(t)$ for every $t\in\R$, with the system $\dot z=L_0(t)z$ being $T$-nonresonant. Then for every $\epsilon>0$ there exists $\bar r_0=\bar r_0(\epsilon)$ such that, for every $0<r_0<\bar r_0$ we have
	\begin{gather*}
		\deg (f_T,B_{r_0},0)=\deg (f_T^{L_0},B_{r_0},0) \\\rot(F_T,\partial B_{r_0})\subset \, (-\epsilon,\epsilon)+\rot(F_T^{L_0},\partial B_{r_0})
	\end{gather*}
	where, with their usual meaning from Section \ref{sec:flow}, $f_T,F_T$ are associated to the nonlinear system $\dot z=h(t,z)$, while $f_T^{L_0},F_T^{L_0}$ to its linearization at zero $\dot z=L_0(t)z$. The sum in the second equation denotes the Minkowski sum of two intervals in $\R$.
	Moreover, we notice that, taking $\epsilon$ sufficiently small,  we obtain $\Sigma(F_T,\partial B_{r_0})=\Sigma(F_T^{L_0},\partial B_{r_0})$.
\end{lemma}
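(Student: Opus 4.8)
The plan is to prove the claim about asymptotic linearity at zero, i.e.\ Lemma \ref{lem:asymp_zero}. The core idea is to exploit the $\CC^1$-tangency of the nonlinear flow to its linearization at the origin: since $h(t,0)=0$ and $D_z h(t,0)=L_0(t)$, a first-order Taylor expansion gives $h(t,z)=L_0(t)z+o(|z|)$ uniformly in $t\in[0,T]$ (using $T$-periodicity and compactness in $t$). By continuous dependence of solutions on initial data and parameters, this transfers to the time-$T$ maps: for any $\delta>0$ there is $\bar r_0$ so that on $\clos{B_{r_0}}$ with $r_0<\bar r_0$ the rescaled nonlinear Poincar\'e map $x\mapsto \phi_T(r_0 x)/r_0$ is $\delta$-close in $\CC^0$ to the \emph{linear} time-$T$ map of $\dot z=L_0(t)z$, together with the analogous statement for the lifted maps $\Phi_T$. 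This rescaling is the conceptual heart: it converts ``small-radius behaviour of the nonlinear system'' into ``a uniform perturbation of the linear system'' on a fixed-size set.

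First I would set up the linear comparison. The hypothesis that $\dot z=L_0(t)z$ is $T$-nonresonant means $1$ is not a Floquet multiplier, equivalently the linear time-$T$ map has no nonzero fixed point, equivalently $f_T^{L_0}(x)\neq 0$ for all $x\neq 0$. By homogeneity of the linear flow, $f_T^{L_0}$ and $F_T^{L_0}$ depend on the radius only through scaling, so $\deg(f_T^{L_0},B_{r_0},0)$ and $\Sigma(F_T^{L_0},\partial B_{r_0})$ and the interval $\conv\rot(F_T^{L_0},\partial B_{r_0})$ are all independent of $r_0$; in particular $\rot(F_T^{L_0},\cdot)$ is bounded away from any excluded value by a fixed margin. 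This gives the fixed, $r_0$-independent reference quantities that the nonlinear objects must match.

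Next I would close the argument by a homotopy. Using the $\CC^0$-closeness from the rescaling together with the fact that $f_T^{L_0}$ is bounded away from $0$ on the unit sphere (nonresonance), the straight-line homotopy $(1-\lambda)f_T^{L_0}+\lambda\,\tilde f$ between the rescaled nonlinear map and the linear map avoids $0$ on the boundary once $r_0$ is small enough; invariance of Brouwer's degree then yields $\deg(f_T,B_{r_0},0)=\deg(f_T^{L_0},B_{r_0},0)$. For the rotation inclusion, the $\CC^0$-closeness of the \emph{lifted} maps gives $|F_T^\theta - (F_T^{L_0})^\theta|<2\pi\epsilon$ pointwise on $\partial\Us$ once $r_0$ is small, which is exactly $\rot(F_T,\partial B_{r_0})\subset(-\epsilon,\epsilon)+\rot(F_T^{L_0},\partial B_{r_0})$. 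The final remark, $\Sigma(F_T,\partial B_{r_0})=\Sigma(F_T^{L_0},\partial B_{r_0})$ for $\epsilon$ small, follows because nonresonance keeps $\conv\rot(F_T^{L_0},\cdot)$ a fixed compact interval whose endpoints are not integers (or, in a degenerate case, whose only integer lies strictly inside), so a sufficiently small $\epsilon$-enlargement captures the same set of integers.

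The main obstacle I expect is controlling the lifted angular component uniformly, rather than the degree equality, which is a routine homotopy. The difficulty is twofold: first, one must ensure the lift $\Phi_T$ of the nonlinear map is $\CC^0$-close to the lift of the linear map, which requires the trajectories to stay in $\R^2\setminus\{0\}$ uniformly over $[0,T]$ and over $\partial B_{r_0}$ (so that the covering-space lift is well defined and close)---this is where condition \eqref{eq:lift_cond} and the nonvanishing guaranteed by smallness of $r_0$ enter. Second, the angular closeness must be obtained through the polar-like coordinate map $\Psi$, whose inverse may distort distances near the origin; here the rescaling argument must be carried out carefully so that closeness in $\R^2$ of the rescaled maps genuinely translates into closeness of the angular coordinates $F_T^\theta$ along the boundary.
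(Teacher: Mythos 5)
The paper itself does not prove this lemma: it is recalled from \cite[Lemmata 1, 2 and 3]{MaReZa}, so there is no internal proof to compare against. Your rescaling argument (blow-up $z\mapsto z/r_0$, Gronwall/continuous dependence to get $\CC^0$-closeness of the rescaled Poincar\'e map to the linear time-$T$ map on $\clos{B_1}$, a linear homotopy for the degree, and closeness of the angular lifts for the rotation inclusion) is the standard route and is essentially the argument of that reference. The two displayed conclusions are handled correctly: nonresonance gives $\inf_{\abs{x}=1}\abs{f_T^{L_0}(x)}>0$, so the convex homotopy avoids $0$ on the boundary once $r_0$ is small and the degree is preserved under the orientation-preserving rescaling; for the rotation, the rescaled trajectories stay uniformly away from the origin, the angular displacement is scale-invariant, and uniform $\CC^0$-closeness of non-vanishing paths yields uniform closeness of their lifted angles.

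The one step you should not wave at is the final remark $\Sigma(F_T,\partial B_{r_0})=\Sigma(F_T^{L_0},\partial B_{r_0})$. The displayed inclusion only yields $\Sigma(F_T,\partial B_{r_0})\subseteq\Sigma(F_T^{L_0},\partial B_{r_0})$ for small $\epsilon$; for the reverse inclusion you need the two-sided pointwise estimate $\abs{\rot(F_T,x)-\rot(F_T^{L_0},x)}<\epsilon$ (which your argument does in fact produce) \emph{and} the fact that every integer of $\conv\rot(F_T^{L_0},\partial B_{r_0})$ survives an $\epsilon$-shrinking of that interval. Your parenthetical claim that nonresonance forces the endpoints of the interval to be non-integers (or forces its unique integer to be interior) is unproved and is false for general linear systems: if $\phi_T^{L_0}$ is conjugate to a Jordan block $\lambda I+N$ with $\lambda>0$, $\lambda\neq 1$, the system is $T$-nonresonant, yet the unique integer rotation value is attained exactly at an endpoint of the rotation interval, and an arbitrarily small perturbation can push the whole nonlinear rotation set strictly past it, so that $\Sigma(F_T,\partial B_{r_0})=\emptyset\neq\Sigma(F_T^{L_0},\partial B_{r_0})$. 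This pathology is excluded in the Hamiltonian setting of \cite{MaReZa} (and in all of the paper's applications, via Lemma \ref{lem:degrot_linear}), but with the hypotheses as stated here the ``Moreover'' sentence needs either that extra structural information or a separate argument locating the integers of $\conv\rot(F_T^{L_0},\partial B_{r_0})$ away from its endpoints.
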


\begin{lemma}\label{lem:asymp_inf}
	Assume that
	\begin{equation*}
		\lim_{\abs{x}\to +\infty} \frac{\abs{h(t,x)-L_\infty(t)(x)}}{\abs{x}}=0 \quad\text{uniformly in $t\in[0,T]$}
	\end{equation*}
	for a continuous matrix $L_\infty(t)$ such that the system $\dot z=L_\infty(t)z$ is $T$-nonresonant. Then, for every $\epsilon>0$ there exists $\bar r_\infty=\bar r_\infty(\epsilon)$ such that, for every $r_\infty>\bar r_\infty$ we have
	\begin{gather*}
		\deg (f_T,B_{r_\infty},0)=\deg (f_T^{L_\infty},B_{r_\infty},0) \\\rot(F_T,\partial B_{r_\infty})\subset \, (-\epsilon,\epsilon)+\rot(F_T^{L_\infty},\partial B_{r_\infty})
	\end{gather*}
	where, with their usual meaning from Section \ref{sec:flow}, $f_T,F_T$ are associated to the nonlinear system $\dot z=L_0(t)z$, while $f_T^{L_\infty},F_T^{L_\infty}$ to its linearization at infinity $\dot z=L_\infty(t)z$.	
	Moreover, we notice that, taking $\epsilon$ sufficiently small,  we obtain  $\Sigma(F_T,\partial B_{r_\infty})=\Sigma(F_T^{L_\infty},\partial B_{r_\infty})$.
\end{lemma}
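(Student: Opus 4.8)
The plan is to prove Lemma~\ref{lem:asymp_inf} as a near-verbatim analogue of Lemma~\ref{lem:asymp_zero}, exploiting that the asymptotic-linearity hypothesis at infinity plays exactly the same structural role there as the differentiability-at-the-origin hypothesis did near zero. The two conclusions to establish are the degree identity $\deg(f_T,B_{r_\infty},0)=\deg(f_T^{L_\infty},B_{r_\infty},0)$ and the rotation inclusion $\rot(F_T,\partial B_{r_\infty})\subset(-\epsilon,\epsilon)+\rot(F_T^{L_\infty},\partial B_{r_\infty})$; both should follow from a single large-radius homotopy between the nonlinear system and its linearization at infinity.

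First I would fix $\epsilon>0$ and consider the family of Cauchy problems $\dot z = (1-\lambda)h(t,z)+\lambda L_\infty(t)z$ for $\lambda\in[0,1]$, generating evolution maps $\phi^\lambda$ and the associated lifts $\Phi^\lambda$. The $T$-nonresonance of $\dot z=L_\infty(t)z$ guarantees that its only $T$-periodic solution is the trivial one, so $f_T^{L_\infty}$ has no zero on any sphere $\partial B_{r}$; the key quantitative input is that the asymptotic-linearity limit lets one choose $\bar r_\infty$ large enough that, for all $r_\infty>\bar r_\infty$ and all $\lambda$, no solution of the homotopy system starting on $\partial B_{r_\infty}$ returns to its starting point, i.e.\ $f_T^\lambda(x)\neq 0$ on $\partial B_{r_\infty}$. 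Granting this, the homotopy invariance of Brouwer's degree yields the degree identity, and the continuity of $\Phi^\lambda$ in $\lambda$ together with an a priori bound on the angular drift forces $\rot(F_T,\partial B_{r_\infty})$ to lie within $\epsilon$ of $\rot(F_T^{L_\infty},\partial B_{r_\infty})$. The final sentence about $\Sigma$ then follows because shrinking $\epsilon$ makes the $\epsilon$-neighbourhood too thin to enclose any integer not already in the convex hull of $\rot(F_T^{L_\infty},\partial B_{r_\infty})$, so the two sets $\Sigma$ coincide.

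The hard part will be the uniform-in-$\lambda$ a priori estimate establishing that solutions issuing from $\partial B_{r_\infty}$ neither return to their start nor undergo uncontrolled angular excursions, for \emph{all} sufficiently large $r_\infty$ simultaneously. The natural device is a rescaling $w=z/r_\infty$: as $r_\infty\to\infty$, the asymptotic-linearity hypothesis forces the rescaled nonlinear vector field to converge uniformly on compacta (and uniformly in $t\in[0,T]$) to the linear field $L_\infty(t)w$, so the rescaled evolution maps converge to those of the linear system by continuous dependence on the data. Since the nonresonant linear flow keeps $|w|$ bounded away from zero on $[0,T]$ for $|w(0)|=1$ and makes $f_T^{L_\infty}$ nonvanishing on the unit sphere, a compactness/closeness argument transfers both the nonvanishing and the rotation-closeness to the nonlinear homotopy for $r_\infty$ large. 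Care is needed because the convergence after rescaling is only uniform on compact sets in $w$, so one must first pin down an a priori bound guaranteeing the rescaled trajectories stay in a fixed compact set on $[0,T]$; this is exactly the step where the linear system's nonresonance and Grönwall-type control enter, and it is the crux mirroring the corresponding estimate in \cite{MaReZa}.
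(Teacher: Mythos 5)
The paper does not actually prove Lemma~\ref{lem:asymp_inf}: it is recalled, together with Lemma~\ref{lem:asymp_zero}, as a known result from \cite[Lemmata 1, 2 and 3]{MaReZa}, so there is no in-paper proof to compare against. Your sketch reconstructs essentially the standard argument behind that citation (interpolating homotopy of vector fields, rescaling $w=z/r_\infty$, continuous dependence, nonresonance to get $f_T^{L_\infty}\neq 0$ on the unit sphere, homotopy invariance of the degree, and uniform convergence of the lifts for the rotation estimate), and the overall plan is sound.

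Two soft spots are worth tightening. First, the a priori confinement of the rescaled trajectories to a fixed compact annulus $\{c\le|w|\le C\}$ on $[0,T]$ does not come from nonresonance at all: it follows from the linear-growth bound $\abs{h(t,z)}\le(\lVert L_\infty\rVert_\infty+\eta)\abs z+C_\eta$ implied by the asymptotic-linearity hypothesis, via Gr\"onwall applied forwards and backwards in time (this also covers all $\lambda\in[0,1]$ of your homotopy, since the interpolated field obeys the same bound). Nonresonance enters only once, to give $\inf_{\abs w=1}\abs{\phi_T^{L_\infty}(w)-w}=\delta>0$, which by homogeneity and the uniform convergence of the rescaled Poincar\'e maps yields $f_T^\lambda\neq 0$ on $\partial B_{r_\infty}$ for large $r_\infty$. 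Second, for the final claim $\Sigma(F_T,\partial B_{r_\infty})=\Sigma(F_T^{L_\infty},\partial B_{r_\infty})$ the one-sided inclusion $\rot(F_T,\partial B_{r_\infty})\subset(-\epsilon,\epsilon)+\rot(F_T^{L_\infty},\partial B_{r_\infty})$ only rules out extra integers; to get the reverse containment of $\Sigma$ you need the two-sided statement that the lifted angular displacements converge uniformly on the sphere, so that $\conv\rot(F_T,\partial B_{r_\infty})$ is within Hausdorff distance $\epsilon$ of $\conv\rot(F_T^{L_\infty},\partial B_{r_\infty})$; your rescaling argument does deliver this, but the statement as written in your second paragraph does not, and one must also observe that no integer sits exactly at an endpoint of the linear rotation interval (which is where nonresonance, in the form of Lemma~\ref{lem:degrot_linear} for the Hamiltonian case or the corresponding statement in \cite{MaReZa} in general, is used again).
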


\begin{figure}[p]
	\centering 
	\begin{tikzpicture}[line cap=round,line join=round,>=stealth,x=1cm,y=1cm, line width= 1.1pt, scale=0.75]
		\clip(-6,-4.5) rectangle (10.,4.5);
		\draw [line width=0.8pt] (0,0) circle (1cm);
		\draw [->, color=ForestGreen] (1,0) -- (0.717,0);
		\draw [->, color=ForestGreen] (0.866,0.5) -- (0.621,0.698);
		\draw [->, color=ForestGreen] (0.5,0.866) -- (0.358,1.209);
		\draw [->, color=ForestGreen] (0,1) -- (0,1.396);
		\draw [->, color=ForestGreen] (-1,0) -- (-0.717,0);
		\draw [->, color=ForestGreen] (-0.866,0.5) -- (-0.621,0.698);
		\draw [->, color=ForestGreen] (-0.5,0.866) -- (-0.358,1.209);
		\draw [->, color=ForestGreen] (0.866,-0.5) -- (0.621,-0.698);
		\draw [->, color=ForestGreen] (0.5,-0.866) -- (0.358,-1.209);
		\draw [->, color=ForestGreen] (0,-1) -- (0,-1.396);
		\draw [->, color=ForestGreen] (-0.5,-0.866) -- (-0.358,-1.209);
		\draw [->, color=ForestGreen] (-0.866,-0.5) -- (-0.621,-0.698);
		\draw [line width=0.8pt] (0,0) circle (3cm);
		\draw [->, color=ForestGreen] (3,0) -- (2.150,0);
		\draw [->, color=ForestGreen] (2.598,1.5) -- (1.862,2.093);
		\draw [->, color=ForestGreen] (1.5,2.598) -- (1.075,3.626);
		\draw [->, color=ForestGreen] (0,3) -- (0,4.187);
		\draw [->, color=ForestGreen] (-1.5,2.598) -- (-1.075,3.626);
		\draw [->, color=ForestGreen] (-2.598,1.5) -- (-1.862,2.093);
		\draw [->, color=ForestGreen] (-3,0) -- (-2.150,0);
		\draw [->, color=ForestGreen] (-2.598,-1.5) -- (-1.862,-2.093);
		\draw [->, color=ForestGreen] (-1.5,-2.598) -- (-1.075,-3.626);
		\draw [->, color=ForestGreen] (0,-3) -- (0,-4.187);
		\draw [->, color=ForestGreen] (1.5,-2.598) -- (1.075,-3.626);
		\draw [->, color=ForestGreen] (2.598,-1.5) -- (1.862,-2.093);
		\draw (-1,1) node {$B_1$};
		\draw (-2.5,2.5) node {$B_3$};
		\draw[color=ForestGreen] (0,4) node[right] {$f_T$};
	\end{tikzpicture}
	\begin{tikzpicture}[line cap=round,line join=round,>=stealth,x=1cm,y=1cm, scale=0.75, line width=1.1pt]
		\clip(-6,6) rectangle (10.,12);
		\draw [->,color=red] (1.57,10) -- (1.57,11.187);
		\draw [->,color=red] (-1.57,10) -- (-1.57,11.187);
		\draw [->,color=cyan] (-1.57,8) -- (4.61,8.396);
		\draw [->,color=cyan] (1.57,8) -- (7.75,8.396);
		\draw [->,color=red] (-3.14,10) -- (-3.14,9.15);
		\draw [->,color=red] (0.524,10) -- (0.84,9.8);
		\draw [->,color=red] (1.047,10) -- (1.28,10.78);
		\draw [->,color=red] (-2.094,10) -- (-1.86,10.78);
		\draw [->,color=red] (-1.047,10) -- (-1.28,10.78);
		\draw [->,color=red] (-0.524,10) -- (-0.84,9.8);
		\draw [->,color=red] (0,10) -- (0,9.15);
		\draw [->,color=red] (3.14,10) -- (3.14,9.15);
		\draw [->,color=red] (2.618,10) -- (2.3,9.8);
		\draw [->,color=red] (2.094,10) -- (1.86,10.78);
		\draw [->,color=red] (-2.618,10) -- (-2.3,9.8);
		\draw [->,color=cyan] (0,8) -- (6.18,7.72);
		\draw [->,color=cyan] (0.524,8) -- (7.02,7.93);
		\draw [->,color=cyan] (1.047,8) -- (7.46,8.26);
		\draw [->,color=cyan] (-3.14,8) -- (3.14,7.72);
		\draw [->,color=cyan] (-2.618,8) -- (3.99,7.93);
		\draw [->,color=cyan] (-2.094,8) -- (4.32,8.26);
		\draw [->,color=cyan] (-1.047,8) -- (4.9,8.26);
		\draw [->,color=cyan] (-0.524,8) -- (5.34,7.93);
		\draw [->,color=cyan] (2.094,8) -- (8.04,8.26);
		\draw [->,color=cyan] (2.618,8) -- (8.48,7.93);
		\draw [->,color=cyan] (3.14,8) -- (9.42,7.72);
		\draw[->,line width=0.8pt] (-4,7)--(4,7) node[below] {$\theta$};
		\draw[line width=0.5pt] (4,8)--(-4,8) node[left] {$1$};
		\draw[line width=0.5pt] (4,10)--(-4,10) node[left] {$3$};
		\draw[->,line width=0.8pt] (-4,7)--(-4,11) node[left] {$r$};
		\draw[dotted,line width=0.5pt] (-3.14,10)--(-3.14,7) node[below] {$-\pi$};
		\draw[dotted,line width=0.5pt] (0,10)--(0,7) node[below] {$0$};
		\draw[dotted,line width=0.5pt] (3.14,10)--(3.14,7) node[below] {$\pi$};
		\draw[color=cyan] (5.5,9) node {$F_T$ on $\partial \Bs_1$};
		\draw[color=red] (5.5,10) node {$F_T$ on $\partial \Bs_3$};
	\end{tikzpicture}
	
	\begin{tikzpicture}[line cap=round,line join=round,>=stealth,x=1cm,y=1cm,scale=0.75,line width= 1.1pt,]
		\clip(-6,-4) rectangle (10.,5);
		\draw [line width=0.8pt] (0,0) circle (1cm);
		\draw [->,color=cyan] (1,0) -- (7.182,-0.28);
		\draw [->,color=cyan] (0,1) -- (6.182,1.396);
		\draw [->,color=cyan] (0.866,0.5) -- (6.73,0.43);
		\draw [->,color=cyan] (0.5,0.866) -- (6.448,1.126);
		\draw [->,color=cyan] (0.866,-0.5) -- (7.364,-0.57);
		\draw [->,color=cyan] (0.5,-0.866) -- (6.915,-0.606);
		\draw [->,color=cyan] (0,-1) -- (6.182,-0.604);
		\draw [->,color=cyan] (-0.866,0.5) -- (5.632,0.43);
		\draw [->,color=cyan] (-0.5,0.866) -- (5.915,1.126);
		\draw [->,color=cyan] (-0.866,-0.5) -- (4.998,-0.57);
		\draw [->,color=cyan] (-0.5,-0.866) -- (5.448,-0.606);
		\draw [->,color=cyan] (-1,0) -- (5.182,-0.28);
		\draw [line width=0.8pt] (0,0) circle (3cm);
		\draw [->,color=red] (3,0) -- (3,-0.85);
		\draw [->,color=red] (0,3) -- (0,4.187);
		\draw [->,color=red] (2.598,1.5) -- (2.28,1.3);
		\draw [->,color=red] (1.5,2.598) -- (1.266,3.378);
		\draw [->,color=red] (2.598,-1.5) -- (2.914,-1.7);
		\draw [->,color=red] (1.5,-2.598) -- (1.733,-1.818);
		\draw [->,color=red] (0,-3) -- (0,-1.813);
		\draw [->,color=red] (-1.5,2.598) -- (-1.267,3.378);
		\draw [->,color=red] (-2.598,-1.5) -- (-2.916076211353317,-1.7);
		\draw [->,color=red] (-3,0) -- (-3,-0.85);
		\draw [->,color=red] (-1.5,-2.598) -- (-1.734,-1.818);
		\draw [->,color=red] (-2.598,1.5) -- (-2.282,1.3);
		\draw (-1,1) node {$B_1$};
		\draw (-2.5,2.5) node {$B_3$};
		\draw[color=cyan] (5,2) node {$F_T\circ \Psi^{-1}$ on $\partial B_1$};
		\draw[color=red] (5,3) node {$F_T\circ \Psi^{-1}$ on $\partial B_3$};
	\end{tikzpicture}
	\captionsetup{singlelinecheck=off}
	\caption[.]{The maps $f_T$ on $\R^2$ (above), $F_T$ on $\Hs$ (middle) and $F_T\circ\Psi^{-1}$ on $\R^2$ (below) for the dynamics described in Example \ref{ex:planar}. In particular we notice that \vspace{-3mm}
		\begin{gather*}
			\deg(f_T,B_1,0)=\deg(f_T,B_3,0)=-1\\
			\deg(F_T\circ \Psi^{-1},B_1,0)=\mydeg(F_T,B_1,0)=0\\
			\deg(F_T\circ \Psi^{-1},B_3,0)=\mydeg(F_T,B_3,0)=-2 
		\end{gather*}
		\\[-8mm] The first equivalence in Theorem \ref{th:degree} is completed by  $\Sigma(F_T,\partial B_3)=\{0\}$, $\Sigma(F_T,\partial B_1)=\{1\}$ and $\mydeg(F_T,B_1,\nu_1)=-2$.
	}
	\label{fig:twist}
\end{figure}

We are now ready to present a first explicit example, illustrating how Theorem \ref{th:main} is successful in situations where, instead, classical Brouwer's degree is totally ineffective.
More precisely, we describe a family of situations where $\deg(f_T,\inn U,0)=\deg(f_T,\out U,0)=-1$ and the twist condition \eqref{eq:twist} holds, so that, regardless of having $\deg(f_T,A,0)=0$, by Theorem \ref{th:main} we obtain two periodic solutions starting in the annulus $A$. The dynamics is illustrated in Figure \ref{fig:twist}, for a specific choice of the parameters reported at the end of the example.

\begin{example} \label{ex:planar} 
	Let us fix $\tau\in(0,T)$.
		We consider the planar system $\dot z=h(t,z)$, where $h\colon\R\times \R^2\to\R^2$ is continuous for $t\in[0,T]\setminus\{\tau\}$, $T$-periodic in $t$ and continuously differentiable in $z$. Assume moreover that
	\begin{itemize}
		\item $h(t,z)=L_\infty z+p(t,z)$, where $\abs{p(t,z)}<C_p$ for every $(t,z)\in\R^3$, and
		$L_\infty$ is a symmetric matrix with $\det L_\infty=-1$;
		\item $h(t,0)=0$ and $D_z h(t,0)=L_0(t)$ for every $t\in\R$, where
		\begin{equation*}
			L_0(t)=\begin{cases}\frac{2\pi}{\tau}{\footnotesize\begin{pmatrix}  0&1\\-1&0	\end{pmatrix}} &\text{for $t\in[0,\tau)$}\\
				\B_0 &\text{for $t\in[\tau,T)$}
			\end{cases}
		\end{equation*} for some symmetric matrix $\B_0$ with $\det \B_0=-1$.
	\end{itemize}
	We show that the system has at least two $T$-periodic solutions in addition to the trivial one $z\equiv 0$.
	
	We define $\phi\colon[0,T]\times \R^2 \to \R^2$ as the evolution map associated to the solutions of the equations, noticing that we have global existence and uniqueness of solution, and set $f_T,F_T$ as in Section \ref{sec:flow}. Similarly, we denote with $\phi^{L_0}$ and $\phi^{L_\infty}$ respectively the evolution maps of the linear systems $\dot z=L_0(t)z$ and $\dot z=L_\infty z$, and $f_T^{L_0},F_T^{L_0},f_T^{L_\infty},F_T^{L_\infty}$ accordingly. Notice also that $\Null=\{0_{\R^2}\}$ for all three systems. We observe that $\phi^{L_0}(\tau,\cdot)=\I$, so that $\phi^{L_0}(T,z)=e^{(T-\tau)\B_0}z$. By Lemma \ref{lem:asymp_zero} we have that, for a sufficiently small $\inn r>0$:
	\begin{align*}
		&\deg (f_T,B_{\inn r},0)=\deg (f_T^{L_0},B_{\inn r},0)=-1 \\ &\Sigma(F_T,\partial B_{\inn r})=\Sigma(F_T^{L_0},\partial B_{\inn r})=\{1\}
	\end{align*}
Noticing that Lemma \ref{lem:asymp_inf} can be easily extended to allow the switching of the dynamics at $t=\tau$, we have that, for $\out r>0$ sufficiently large:
	\begin{align*}
		&\deg (f_T,B_{\out r},0)=\deg (f_T^{L_\infty},B_{\out r},0)=-1 \\ &\Sigma(F_T^{L_\infty},\partial B_{\out r})=\Sigma(F_T,\partial B_{\out r})=\{0\}\,.
	\end{align*}
	Applying both points in Theorem \ref{th:main} to the annulus $A=\clos{B_{\out r}}\setminus B_{\inn r}$ we obtain two  (nontrivial) $T$-periodic solutions: the first does not rotate around the origin, while the second makes exactly one rotation around the origin.
	
	In Figure \ref{fig:twist}, the maps $f_T$, $F_T$ and $F_T\circ\Psi^{-1}$ are illustrated on the boundary of $A$ for $\inn r=1$ and $\out r=3$  in the special case where
	\begin{equation*}
		\B_0=\begin{pmatrix}
			-1 &0\\ 0&1
		\end{pmatrix} \,, \qquad 	L_\infty=\begin{pmatrix}
			-1 &0\\ 0&1
		\end{pmatrix}\,,
	\end{equation*} $h(t,x)=L_0(t)x$ for $\abs{x}<1.5$,  $h(t,x)=L_\infty x$ for $\abs{x}>2$.
\end{example}

In Example \ref{ex:planar} we presented a very simple behaviour at zero and at infinity,  in order to make the degree and rotational properties very easy to recognize. However, it is possible to characterize such features in much more general situations. The most remarkable case is that of (nonresonant) linear planar Hamiltonian systems, where such information is contained in  Maslov's index (sometimes called also Conley--Zehnder index) associated to the system \cite{Abb}. Writing $J=\left(\begin{smallmatrix}
	0&-1\\1&0
\end{smallmatrix}\right)$, we recall the following properties (cf.~\cite[Lemma 4]{MaReZa}, bearing in mind that we are counting clockwise rotations).
\begin{lemma}\label{lem:degrot_linear}
	Let us consider the linear planar Hamiltonian system $\dot z=JL(t)z$, where $L(t)$ is symmetric, continuous in $t$, and the system is $T$-nonresonant. We denote with $\phi$ the evolution map associated to the system, and with $i_T$ its Maslov's index with respect to the interval $[0,T]$. Then, if $i_T=2k$ is even, we have, for every $r>0$
	\begin{align*}
		\deg(f_T,B_r,0)=-1\,,&& -k-\frac{1}{2}<\rot(F_T,\partial B_r)<-k+\frac{1}{2} \,.
	\end{align*}
	If, instead, $i_T=2k+1$ is odd, we have, for every $r>0$
	\begin{align*}
		\deg(f_T,B_r,0)=1\,,&& -k-1<\rot(F_T,\partial B_r)<-k
	\end{align*}
	(where $f_T$ and $F_T$ has the usual meaning defined in Section \ref{sec:flow}).
\end{lemma}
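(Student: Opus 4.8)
The plan is to exploit the linearity of the flow, reducing everything to the monodromy matrix and to the homotopy class of the symplectic path it generates. Since $\dot z=JL(t)z$ is linear, its evolution map is $\phi(t,z)=\Phi_L(t)z$, where $\Phi_L$ is the fundamental matrix with $\Phi_L(0)=\I$; as $L$ is symmetric, $JL(t)$ is Hamiltonian, so $\Phi_L(t)\in\mathrm{Sp}(2,\R)=\mathrm{SL}(2,\R)$, the map is area-preserving and $\det\Phi_L(t)=1$. Writing $M:=\Phi_L(T)$, the $T$-nonresonance hypothesis is exactly $\det(M-\I)\neq 0$, i.e.\ $1\notin\operatorname{spec}(M)$. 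The first observation I would record is that, because the flow is positively homogeneous, the rotation of a solution depends only on the \emph{direction} of its initial datum and not on its modulus; hence $\rot(F_T,\partial B_r)$ is the same set for every $r>0$, which already accounts for the uniformity in $r$ in the statement.

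Next I would compute the degree. Since $f_T(z)=(M-\I)z$ is a nonsingular linear map, $\deg(f_T,B_r,0)=\operatorname{sign}\det(M-\I)$. I would then invoke the standard parity property of the Maslov (Conley--Zehnder) index in dimension two, cf.~\cite{Abb,MaReZa}: the index is even precisely when $\det(M-\I)<0$ (the positive-hyperbolic case) and odd precisely when $\det(M-\I)>0$ (the elliptic and negative-hyperbolic cases). This gives at once $\deg(f_T,B_r,0)=-1$ when $i_T=2k$ and $\deg(f_T,B_r,0)=+1$ when $i_T=2k+1$.

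For the rotation estimates I would pass to the angular variable. Using the clockwise coordinates \eqref{eq:Fpolar} and writing $\psi$ for the standard counterclockwise argument of the solution $z(t)$, a direct computation gives $\dot\psi=\scal{L(t)z}{z}/\abs{z}^2$, whence $\rot(F_T,x)=-\tfrac{1}{2\pi}\int_0^T \scal{L(t)z}{z}\,\abs{z}^{-2}\dd t$ along the solution through the direction of $x$. As this is a continuous function of the initial direction on the circle, $\rot(F_T,\partial B_r)$ is a compact interval. The key structural remark is that a solution has an \emph{integer} clockwise rotation number iff it returns to its initial ray, i.e.\ iff its initial datum is an eigenvector of $M$ with a positive eigenvalue, and a \emph{half-integer} one iff that eigenvalue is negative; nonresonance forbids the eigenvalue $1$. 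Consequently, in the positive-hyperbolic case ($i_T$ even) the two eigendirections carry integer rotation numbers and the interval contains exactly one integer, whereas in the elliptic and negative-hyperbolic cases ($i_T$ odd) no solution has an integer rotation number, so the interval avoids $\Z$.

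It then remains to identify which integer (resp.\ which gap between consecutive integers) is selected and to sharpen the inclusions to the half-width bounds. Here I would combine the homotopy invariance of the Maslov index with the continuity of both the degree and the rotation function under deformations of $L(t)$ within the $T$-nonresonant class: along such a deformation $i_T$ is constant, $\det(M-\I)$ keeps its sign, and the endpoints of $\rot(F_T,\partial B_r)$ move continuously without ever crossing the (half-)integers excluded by the dichotomy above, so the interval stays trapped in a fixed window of width one. Contracting each nonresonant class onto an autonomous normal form---rigid rotations $e^{\theta J}$ in the elliptic case, $e^{tJL}$ with $L$ indefinite in the hyperbolic cases, composed with full turns to reach an arbitrary $k$---where $\rot$, $\det(M-\I)$ and $i_T$ are all explicit, fixes the window as $(-k-\tfrac12,-k+\tfrac12)$ when $i_T=2k$ and $(-k-1,-k)$ when $i_T=2k+1$. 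The main obstacle is precisely this last step: pinning the exact value $k$ and the half-integer endpoints needs the fine normalization of the Maslov index, not merely its parity, so the delicate point is to set up the continuation to the normal forms consistently with the clockwise orientation convention and to verify the explicit bracketing there.
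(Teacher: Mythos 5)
The paper does not actually prove this lemma: it is recalled verbatim (up to the clockwise convention) from \cite[Lemma 4]{MaReZa}, with \cite{Abb} as background for the Maslov index, so there is no internal proof to compare against. Judged on its own merits, your reconstruction is essentially the standard argument behind that cited result and is correct in outline: linearity gives $f_T(z)=(M-\I)z$ and hence $\deg(f_T,B_r,0)=\operatorname{sign}\det(M-\I)=\operatorname{sign}(2-\operatorname{tr}M)$, the parity formula $(-1)^{i_T}=-\operatorname{sign}\det(\I-M)$ gives the degree dichotomy, homogeneity gives independence of $r$, and the eigendirection dichotomy (integer rotation $\Leftrightarrow$ positive real eigenvector, half-integer rotation $\Leftrightarrow$ negative real eigenvector) localizes the rotation interval between consecutive half-integers or consecutive integers. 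Two points deserve to be made explicit. First, in the positive-hyperbolic case you need that \emph{all four} eigenrays carry the \emph{same} integer rotation number before concluding that the interval sits in $(-k-\tfrac12,-k+\tfrac12)$; this follows from the intermediate value theorem (two distinct integer values would force an intermediate half-integer value, hence a negative eigenvalue) or from the order-preservation of the projectivized flow, but as written it is asserted rather than argued. Second, as you yourself flag, the identification of the specific integer $k$ with the Maslov index is only sketched; the continuation to normal forms within a fixed nonresonant homotopy class is the right mechanism (any path in $\mathrm{Sp}(2,\R)$ issuing from $\I$ is the fundamental solution of some $\dot z=JL(t)z$ with $L$ symmetric, so the deformation stays in the admissible class), but carrying it out requires committing to a normalization of the index consistent with the clockwise convention, which is precisely what the citation to \cite{MaReZa} supplies. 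So your proposal is a legitimate, more self-contained route than the paper's, at the cost of having to re-derive the index normalization that the paper simply imports.
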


Let us now consider the planar Hamiltonian system 
\begin{equation}\label{eq:hamsist}
	\dot z= JD_z H(t, z)\,,
\end{equation}
 where $H\colon\R\times \R^2\to \R$ is continuous, $T$-periodic in $t$, and continuously
differentiable in $z$, with $D_zH(t,\cdot)$ Lipschitz continuous uniformly
in time.  As usual, when discussing the dynamics defined by \eqref{eq:hamsist}, we denote with $\phi\colon[0,T]\times \R^2 \to \R^2$ the evolution map associated to the solutions of the system,  and set $f_T,F_T$ accordingly, as in Section \ref{sec:flow}.
Combining the results above we obtain the following theorem, cf.~\cite[Theorem 13]{GidMar}.

\begin{theorem}\label{th:gidmar} Let us consider the Hamiltonian system \eqref{eq:hamsist}. Suppose that the Hamiltonian function $H(t, z)$ is twice differentiable at the origin $z = 0$ with respect to the space variable $z$, with $D_z H(t, 0) = 0$ and $D_{zz}H(t, 0) = \B_0(t)$, with $\B_0$ continuous, and that there exists a matrix $\B_\infty(t)$ continuous in time such that 
	\begin{equation*}
		\lim_{\abs{z}\to+\infty} \frac{\abs{D_zH(t,z)-\B_\infty(t)z}}{\abs{z}}=0 \qquad\text{uniformly in $t\in[0,T]$\,.}
	\end{equation*} 
Moreover, suppose that the linear systems at zero and infinity are
	$T$-nonresonant and denote respectively with $i_0$ and $i_\infty$ their $T$-Maslov indices. Then
	system \eqref{eq:hamsist} has at least $\abs{i_\infty-i_0}$ nontrivial $T$-periodic solutions (in addition to the trivial one $z\equiv 0$). Furthermore, if $i_0$ is even with $i_0\neq i_\infty$, then the number of nontrivial solutions is at least $\abs{i_\infty-i_0}+1$.
\end{theorem}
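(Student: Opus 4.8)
The plan is to reduce the multiplicity count to an application of the twist Theorem~\ref{th:main} (together with its area-preserving refinement, Corollary~\ref{corol:areapres}) on a chain of nested annuli, using the linear-system information encoded in the Maslov indices via Lemma~\ref{lem:degrot_linear}. First I would fix a small inner radius $\inn r$ and a large outer radius $\out r$, obtained by applying Lemmata~\ref{lem:asymp_zero} and~\ref{lem:asymp_inf} respectively: for these radii the degree $\deg(f_T,B_{\inn r},0)$ and the set $\Sigma(F_T,\partial B_{\inn r})$ agree with those of the linearization at zero, and likewise at infinity for $B_{\out r}$. Lemma~\ref{lem:degrot_linear} then converts the Maslov indices $i_0$ and $i_\infty$ into concrete rotation intervals: writing $i_0,i_\infty$ in terms of the integer parts $k_0,k_\infty$ (distinguishing the even and odd cases), the rotation $\rot(F_T,\partial B_{\inn r})$ lies in an interval centred near $-i_0/2$ and $\rot(F_T,\partial B_{\out r})$ near $-i_\infty/2$.

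Next I would exploit the fact that a Hamiltonian flow is area-preserving, so Corollary~\ref{corol:areapres} applies. The key observation is that between the inner and outer rotation ranges there are precisely the integers strictly separating $-i_0/2$ and $-i_\infty/2$; concretely, for each integer $m$ lying strictly between these two values I would choose an intermediate radius $r_m$ whose rotation interval isolates $m$, or more robustly simply invoke Theorem~\ref{th:main} on the single annulus $A=\clos{B_{\out r}}\setminus B_{\inn r}$ repeatedly for each admissible rotation number $i$. The twist condition~\eqref{eq:twist} must be checked: by Lemma~\ref{lem:degrot_linear} the rotation intervals at the two boundaries are open intervals of length one not containing their endpoints' neighbouring integers, so $\Sigma(F_T,\partial B_{\inn r})$ and $\Sigma(F_T,\partial B_{\out r})$ are disjoint (or empty) exactly as required. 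Counting the integers that fall between the two rotation ranges yields $\abs{i_\infty-i_0}$ distinct admissible values of $i$, and for each Theorem~\ref{th:main} (applied in whichever direction has degree $\neq 1$, guaranteed since the linear degrees are $\pm1$) produces a nontrivial $T$-periodic solution with that rotation number; distinct rotation numbers force the solutions to be geometrically distinct, giving at least $\abs{i_\infty-i_0}$ of them.

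For the final sharpening, when $i_0$ is even the degree at zero equals $-1$ by Lemma~\ref{lem:degrot_linear}; this is precisely the hypothesis $\deg(f_T,\inn U,0)=-1<1$ of the second bullet of Corollary~\ref{corol:areapres}, which guarantees not merely one but $k+1$ solutions in the innermost annulus rather than $k$. Tracking this extra solution through the count produces the additional periodic solution, raising the total to $\abs{i_\infty-i_0}+1$. I would phrase this by applying Corollary~\ref{corol:areapres} to capture the multiplicity coming from a boundary whose degree is $-1$ rather than merely $\neq 1$.

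The main obstacle I anticipate is bookkeeping the rotation intervals and the parity conventions so that the integer count comes out exactly to $\abs{i_\infty-i_0}$ without off-by-one errors. The rotation intervals from Lemma~\ref{lem:degrot_linear} differ in their centring between the even and odd Maslov cases (one is symmetric around a half-integer $-k\pm\frac12$, the other around an integer with endpoints $-k-1$ and $-k$), so I would have to carefully match which integers are \emph{strictly between} the two intervals depending on the parities of $i_0$ and $i_\infty$, and verify that the overlap-avoidance in the twist condition~\eqref{eq:twist} holds precisely because those open intervals exclude the separating integers. Ensuring that each admissible $i$ yields a genuinely distinct and nontrivial solution (distinct from the trivial equilibrium and from one another) is routine once the rotation numbers are shown to differ, but the parity-dependent counting is where the care is needed.
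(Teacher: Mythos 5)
There is a genuine gap in the central counting step. Your plan produces all $\abs{i_\infty-i_0}$ solutions from Theorem~\ref{th:main} (plus Corollary~\ref{corol:areapres}) alone, claiming that each integer lying between the two rotation ranges is an ``admissible rotation number'' for that theorem. But Theorem~\ref{th:main} only yields fixed points whose rotation number belongs to $\Sigma(F_T,\partial \inn U)$ or $\Sigma(F_T,\partial \out U)$, and by Lemma~\ref{lem:degrot_linear} each of these sets is at most a singleton here (it is $\{-k\}$ when the corresponding Maslov index is $2k$, and empty when the index is odd, since then the rotation interval is contained in $(-k-1,-k)$). So the degree-based twist theorem can only reach the two ``endpoint'' rotation numbers $-i_0/2$ and $-i_\infty/2$ (when even), giving at most three solutions in total (two at the inner boundary via Corollary~\ref{corol:areapres}, since $\deg=-1$ means $k=1$ there, and one at the outer boundary, where $\deg=-1$ does not satisfy the $h>1$ hypothesis of the first bullet). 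It says nothing about the integers $n$ with $i_0<2n<i_\infty$ that lie strictly in the gap between the two rotation ranges, and for large $\abs{i_\infty-i_0}$ the count collapses. Your fallback of choosing intermediate radii $r_m$ isolating each such $m$ also fails: even if such radii existed, Theorem~\ref{th:main} would additionally require $\deg(f_T,B_{r_m},0)\neq 1$, and no degree information is available at intermediate radii (only in the asymptotic regimes covered by Lemmata~\ref{lem:asymp_zero} and~\ref{lem:asymp_inf}); moreover $\Sigma(F_T,\partial B_{r_m})$ could contain many integers, destroying the twist condition~\eqref{eq:twist}.

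The paper's proof handles exactly these intermediate rotation numbers with the Poincaré--Birkhoff Theorem~\ref{th:PB}: for each $n$ with $i_0<2n<i_\infty$ one has $\rot(F_T,\partial B_{r_0})>-n>\rot(F_T,\partial B_{r_\infty})$ (or the reversed inequalities), and Theorem~\ref{th:PB} yields \emph{two} solutions with rotation $-n$; this factor of two is essential to the arithmetic that makes the total come out to $\abs{i_\infty-i_0}$ (respectively $\abs{i_\infty-i_0}+1$ when $i_0$ is even). This is also precisely where the Hamiltonian structure is used beyond mere area preservation. Your treatment of the boundary contributions --- Theorem~\ref{th:main} at the outer boundary when $i_\infty$ is even, Corollary~\ref{corol:areapres} at the inner boundary when $i_0$ is even for the extra solution --- matches the paper, and your verification of the twist condition via Lemma~\ref{lem:degrot_linear} is sound; but without invoking the Poincaré--Birkhoff Theorem for the gap integers the claimed lower bound is not reached.
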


In \cite{GidMar}, Theorem \ref{th:gidmar} was proved using a simpler version of the degree $\mydeg$, whose values were computed manually as winding number.
We propose here a shorter proof of Theorem \ref{th:gidmar}, based on Theorem \ref{th:main}.  We refer to \cite{GidMar} for examples showing that the lower bound on the number of periodic solutions is sharp. 

First, we state a suitable variant of the Poincaré--Birkhoff Theorem for planar Hamiltonian systems. This statement is a special case of the main theorem in \cite{FonUre}. 
\begin{theorem}\label{th:PB}
Let us consider the Hamiltonian system \eqref{eq:hamsist}. Assume that there exist $\out r>\inn r>0$ and $n\in \Z$ such that 
\begin{equation*}
\rot(F_T,\partial B_{\inn r})<n<\rot(F_T,\partial B_{\out r}) \quad \text{or}\quad \rot(F_T,\partial B_{\inn r})>n>\rot(F_T,\partial B_{\out r})  \,.
\end{equation*}
Then \eqref{eq:hamsist} has at least two $T$-periodic solutions $\bar z_1, \bar z_2$, such that $\bar z_1(0), \bar z_2(0)\in A=B_{\out r}\setminus\clos{B_{\inn r}}$ and $\rot(F_T,\bar z_1(0))=\rot(F_T,\bar z_2(0))=n$.
\end{theorem}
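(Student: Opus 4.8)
The plan is to deduce this statement directly from the higher-dimensional Poincaré--Birkhoff theorem of Fonda and Ureña \cite{FonUre}, specialized to a single planar factor. The essential structural ingredient here — absent in the degree-based Theorem \ref{th:main} — is that the time-$T$ evolution map $\phi_T$ of the Hamiltonian system \eqref{eq:hamsist} is an area-preserving, orientation-preserving homeomorphism of the plane. Indeed, under the stated regularity of $D_zH$ (Lipschitz in $z$ uniformly in $t$, and $T$-periodic in $t$) solutions exist globally and are unique, so $\phi_T$ is a well-defined symplectomorphism. This area-preservation is precisely the hypothesis on which the Poincaré--Birkhoff machinery rests, and it is what allows one to conclude the existence of \emph{fixed points} (rather than merely a nonzero degree).

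First I would set up the annular region $A$ surrounding the origin and lift the flow to $\Hs$ through the polar-like coordinates $\Psi$ of \eqref{eq:Fpolar}, obtaining the angular component $F_T^\theta = 2\pi\,\rot(F_T,\cdot)$ as in Section \ref{sec:flow}. The hypothesis says exactly that the total (clockwise) angular displacement over $[0,T]$, measured by $\rot(F_T,\cdot)$, along the inner circle $\partial B_{\inn r}$ and along the outer circle $\partial B_{\out r}$ lies strictly on opposite sides of the integer $n$. This is verbatim the twist condition required in \cite{FonUre}: along one boundary every solution winds strictly fewer than $n$ times, while along the other it winds strictly more than $n$ times. Crucially, the Fonda--Ureña result requires neither invariance nor star-shapedness of the two boundary circles, so no geometric hypothesis on $B_{\inn r},B_{\out r}$ beyond the twist inequalities is needed.

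I would then invoke the theorem of \cite{FonUre} in its case $N=1$. For a single planar factor it yields at least two geometrically distinct $T$-periodic solutions $\bar z_1,\bar z_2$ whose initial points lie in the open annulus $A$ and whose prescribed winding number equals exactly $n$, that is $\rot(F_T,\bar z_1(0))=\rot(F_T,\bar z_2(0))=n$. This matches the claimed conclusion, including the localization of the initial data in $A$ and the precise rotation number.

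The main point demanding care — and the only (mild) obstacle — is matching orientation conventions. Since \eqref{eq:Fpolar} uses clockwise polar coordinates, the orientation-preserving choice adopted throughout the paper, the sign of $\rot(F_T,\cdot)$ is reversed relative to the counterclockwise winding customarily used to phrase \cite{FonUre}. However, the hypothesis is stated symmetrically through the two alternatives ($\rot<n<\rot$ or $\rot>n>\rot$), so whichever convention one fixes, exactly one of the two inequalities furnishes the correct twist condition and the theorem applies unchanged; the prescribed integer $n$ is unaffected by the orientation reversal. Apart from this bookkeeping, the argument is a direct specialization, so the statement follows.
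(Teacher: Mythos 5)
Your proposal is correct and coincides with the paper's own treatment: the paper gives no independent proof of Theorem \ref{th:PB}, stating only that it is obtained as a special case of the main theorem of Fonda--Ure\~na \cite{FonUre}, which is exactly your $N=1$ specialization (your additional remarks on global existence, area preservation, and the clockwise orientation convention are consistent with the paper's setup). Nothing further is needed.
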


\begin{proof}[Proof of Theorem \ref{th:gidmar}]
We denote with $r_0,r_\infty$ two radii sufficiently small, resp.~large, in the sense of Lemmata \ref{lem:asymp_zero} and \ref{lem:asymp_inf}, taking $\epsilon$ sufficiently small to guarantee the invariance of $\Sigma$. Assuming $i_0\neq i_\infty$ we notice that, by Lemma \ref{lem:degrot_linear}, \eqref{eq:twist} holds and:
\begin{itemize}
	\item for every $n\in \Z$ such that $i_0<2n<i_\infty$ (or $i_\infty<2n<i_0$) there exists two $T$-periodic solutions making exactly $-n$ rotations around the origin. Such solutions are obtained by Theorem \ref{th:PB}, noticing that $\rot (F_T,\partial B_{r_0})>-n>\rot (F_T,\partial B_{r_\infty})$ (resp.~$\rot (F_T,\partial B_{r_0})<-n<\rot (F_T,\partial B_{r_\infty})$). 
	\item if $i_\infty=2k$ is even, then $\deg(f_T,B_{r_\infty},0)=-1$ and $\Sigma(F_T,\partial B_{r_\infty})=\{-k\}$. Hence by Theorem \ref{th:main} there exists a $T$-periodic solution making exactly $-k$ rotations around the origin.
	\item if $i_0=2\ell$ is even, then $\deg(f_T,B_{r_0},0)=-1$ and $\Sigma(F_T,\partial B_{r_0})=\{-\ell\}$. Hence by Corollary \ref{corol:areapres} there exist two $T$-periodic solutions, each making exactly $-\ell$ rotations around the origin.
\end{itemize}
\end{proof}

Moreover, we remark that in Theorem \ref{th:gidmar} the Hamiltonian structure is necessary only in the part where the Poincaré--Birkhoff Theorem is used, so that we have the following result, valid for systems that may not be Hamiltonian.
\begin{prop}
	Let $H$ be as in Theorem \ref{th:gidmar}, with $i_0\neq i_\infty$, and let $p\colon\R\times \R^2\to \R$ be continuous, $T$-periodic in $t$ and Lipschitz continuous in $z$. Suppose that $p$ is differentiable at $z=0$ with respect to $z$, with $p(t,0)=D_zp(t,0)=0$ for every $t$ and $p(t,z)/\abs{z}\to 0$ uniformly in $t$ as $\abs{z}\to+\infty$. Then the system $\dot z=JD_zH(t,z)+p(t,z)$ has
	\begin{itemize}
		\item if $i_0$ is even, a $T$-periodic solution making $-i_0/2$ rotations around the origin;
		\item if $i_\infty$ is even, a $T$-periodic solution making $-i_\infty/2$ rotations around the origin.
	\end{itemize}
\end{prop}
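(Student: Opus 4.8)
The plan is to re-run the proof of Theorem \ref{th:gidmar} for the perturbed field, but to keep only the steps that rely on the degree-based twist Theorem \ref{th:main} (which needs no area-preservation) and to discard the two steps that used the Hamiltonian structure, namely the Poincaré--Birkhoff Theorem \ref{th:PB} and Corollary \ref{corol:areapres}. First I would set $\tilde h(t,z):=JD_zH(t,z)+p(t,z)$. Since $D_zH(t,\cdot)$ and $p(t,\cdot)$ are both Lipschitz, $\tilde h$ is Lipschitz in $z$, so solutions are unique; the asymptotically linear growth of $D_zH$ together with the sublinearity of $p$ yields $\abs{\tilde h(t,z)}\le C(1+\abs{z})$, hence global existence on $[0,T]$ by Gronwall and a well-defined continuous evolution map $\phi$ with $\phi(0,\cdot)=\I$. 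Because $D_zH(t,0)=0$ and $p(t,0)=0$ give $\tilde h(t,0)=0$, the constant $z\equiv0$ is the unique solution through the origin, so $\Null=\{0_{\R^2}\}$ and the admissibility condition $\Null\cap\partial A=\emptyset$ holds for any annulus built from balls $B_r$, $r>0$.

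The one place where the hypotheses on $p$ are genuinely used is to check that the perturbation leaves the two linearizations untouched. At the origin, $D_z\tilde h(t,0)=JD_{zz}H(t,0)+D_zp(t,0)=J\B_0(t)$ since $D_zp(t,0)=0$, so the linearization at zero is $\dot z=J\B_0(t)z$, exactly as for \eqref{eq:hamsist}, with Maslov index $i_0$. At infinity, $\tilde h(t,z)-J\B_\infty(t)z=[JD_zH(t,z)-J\B_\infty(t)z]+p(t,z)$, and both summands are $o(\abs{z})$ uniformly in $t$ (the first by the asymptotic hypothesis on $H$, the second by $p(t,z)/\abs{z}\to0$), so the linearization at infinity is $\dot z=J\B_\infty(t)z$, with Maslov index $i_\infty$. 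Applying Lemmata \ref{lem:asymp_zero} and \ref{lem:asymp_inf} to $\tilde h$, I then fix a small $r_0$ and a large $r_\infty$ (with $\epsilon$ small enough to guarantee invariance of $\Sigma$) for which $\deg(f_T,\cdot,0)$ and $\Sigma(F_T,\partial\,\cdot)$ on $B_{r_0}$ and $B_{r_\infty}$ agree with those of the respective linear Hamiltonian systems, which are read off from Lemma \ref{lem:degrot_linear}.

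With these identifications the conclusion is immediate. By Lemma \ref{lem:degrot_linear}, an even index $2m$ gives $\deg(f_T,B_r,0)=-1\neq1$ and $\Sigma(F_T,\partial B_r)=\{-m\}$, whereas an odd index gives $\Sigma=\emptyset$; since $i_0\neq i_\infty$, in every case the $\Sigma$-sets at $\partial B_{r_0}$ and $\partial B_{r_\infty}$ are disjoint, so the twist condition \eqref{eq:twist} holds on $A=\clos{B_{r_\infty}}\setminus B_{r_0}$. If $i_0=2\ell$ is even I apply the first bullet of Theorem \ref{th:main} with $\inn U=B_{r_0}$: as $\deg(f_T,\inn U,0)=-1\neq1$, there is a fixed point of $\phi_T$ in $A$ with $F_T(x)=\nu_{-\ell}$, i.e.\ a $T$-periodic solution making $-i_0/2$ rotations. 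If $i_\infty=2k$ is even I apply the second bullet with $\out U=B_{r_\infty}$, obtaining a $T$-periodic solution making $-i_\infty/2$ rotations.

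The step I expect to deserve the most care is the application of Lemmata \ref{lem:asymp_zero} and \ref{lem:asymp_inf}, which are phrased for fields that are $C^1$ in $z$, whereas $\tilde h$ is only Lipschitz in $z$ and differentiable at the origin. I would note, however, that this is precisely the regularity already carried by $JD_zH$ in Theorem \ref{th:gidmar} (where $D_zH(t,\cdot)$ is merely Lipschitz), so no new difficulty is introduced: the conclusions of those lemmata depend only on differentiability at $0$ and on the uniform sublinear estimate at $\infty$, both of which hold, and the underlying homotopy arguments go through; alternatively one approximates $p$ by $C^1$ fields sharing the same two asymptotic regimes and passes to the limit via the homotopy invariance of Corollary \ref{cor:homot}. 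The real content of the statement is conceptual rather than technical: dropping the Hamiltonian structure costs exactly the Poincaré--Birkhoff input, so one retains one solution per even index but loses both the solutions attached to intermediate integers and the doubled count for even $i_0$.
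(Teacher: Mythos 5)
Your proof is correct and follows essentially the same route as the paper, which disposes of this proposition in one line as a direct corollary of Theorem \ref{th:main} combined with Lemmata \ref{lem:asymp_zero}, \ref{lem:asymp_inf} and \ref{lem:degrot_linear}; you have simply written out the details (unchanged linearizations at $0$ and $\infty$, the $\Sigma$-sets read off from the parity of the Maslov indices, the twist condition, and the two bullets of Theorem \ref{th:main}) that the paper leaves implicit. Your remark on the regularity mismatch in the Lemmata is a sensible extra precaution, but it introduces no divergence from the paper's argument.
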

\begin{proof}
	The result is a direct corollary of Theorem \ref{th:main}, combined with Lemmata \ref{lem:asymp_zero}, \ref{lem:asymp_inf} and \ref{lem:degrot_linear}.
\end{proof}

\subsection{A second example with zero Brouwer's degree}
We now present a basic example showing how our theory is effective in situation which cannot be  approached with classical methods.
		We consider the planar system $\dot z=h(t,z)$, where $h\colon\R\times \R^2\to\R^2$ is continuous for $t\in[0,T]\setminus\{T/2\}$, $T$-periodic in $t$ and continuously differentiable in $z$. We set, for some $r>0$,
	\begin{equation*}
		\text{for every $\abs{z}\geq r$,}\quad h(t,z)=\begin{cases}\frac{4\pi}{T}{\footnotesize\begin{pmatrix}  0&1\\-1&0	\end{pmatrix}} z &\text{if $t\in[0,T/2)$}\\
		v &\text{if $t\in[T/2,T)$}
		\end{cases}  
	\end{equation*} 
where $v$ is a vector with $\abs{v}=1$. With the usual notation from Section~\ref{sec:flow} and taking $R>r+T/2$ we have $f_T(x)=\frac{T}{2} v$ for every $x\in \partial U_R$ and thus $\deg(f_T,U_R,0)=0$. Hence, standard topological methods working on the Poincaré map cannot be applied to find a $T$-periodic solutions (including classical fixed-point theorems, which works in situation with non-zero degree), nor such an issue can be bypassed using continuation methods.

On the other hand, it is easily verified that $\Sigma(F_T,\partial U_R)=\{1\}$, which, by Theorem~\ref{th:degree}, implies $\mydeg(F_T,U_R,\nu_1)=-1$, hence the existence of a $T$-periodic solution of the system making exactly one rotation around the origin.

\subsection{Superlinearity at infinity} \label{sec:superlinear}
Our third example deals with another classical situation producing twist on an annulus: a superlinear behaviour at infinity produced by a superlinear second order ODE. In such a case classical topological methods can be applied to find a periodic solution, cf.~\cite{GidODE} and references therein; we briefly repropose here such result in our framework.
Multiplicity results have been obtained via the Poincaré--Birkhoff Theorem when an Hamiltonian structure is present. For multiplicity, a minor improvement can be obtained by our results, which we discuss with the purpose to show, once more, how our approach complements the  Poincaré--Birkhoff Theorem.

We study the second order ordinary differential equation
\begin{equation}\label{eq:Fode}
	\ddot{u}+g(t,u)+p(t,u,\dot u)=0 \,.
\end{equation}
Here $g\colon\R\times \R \to\R$ and $p\colon\R\times \R^2 \to\R$ are continuous, $T$-periodic in the first variable $t$ and continuously differentiable in the remaining variables. On the function $g$ we assume the superlinearity condition
\begin{equation}
	\lim_{\abs{u}\to\infty}\frac{g(t,u)}{u}=+\infty  \qquad \text{uniformly in $t\in[0,T]$}
\end{equation}
whereas $p$ satisfies the bound $\abs{p(t,u,w)}<C_p^0+C_p^1\abs{u}$ for two positive constants $C_p^0,C_p^1$. 
Notice that the scalar equation \eqref{eq:Fode} defines a planar dynamics on the phase plane, namely
\begin{equation} \label{eq:Fsystem}
	\begin{cases}
		\dot u =w\\
		\dot w = -g(t,u)-p(t,u,w)
	\end{cases}
\end{equation}
to which we associate the evolution map $\phi\colon [0,T]\times \Omega\to\R^2$, where the domain $\Omega\subseteq \R^2$ is yet to be discussed.

Intuitively, the key property of the dynamics is that all solutions starting outside a sufficiently large radius are expected to make an arbitrarily large number of rotations, thus producing twist on a sufficiently thick annulus. The main issue is that superlinearity does not guarantee global existence of solutions and thus the evolution map might not be defined on a suitable domain. For instance, in \cite{CofUll} an example is presented, for which Equation \eqref{eq:Fode} admits a solution exploding to infinity in finite time. Some additional assumptions might be required to overcome this problem. 
Recently, it has been shown in \cite{GidODE} that it is sufficient to require the continuability on $[0,T]$ for the solutions crossing the origin of the phase plane in that time interval. In particular, we have the following fact,  cf.~\cite[eq.~(4.19)]{GidODE}.

\begin{prop}\label{prop:gidU}
Let us assume that, for every $\bar t\in[0,T]$, the solutions of \eqref{eq:Fsystem} corresponding to the Cauchy problems $u(\bar t)=w(\bar t)=0$ are continuable on $[0,T]$.
Then there exists $\bar \alpha\in \R$ with the following property: for every $\alpha>\bar \alpha$  there exists an open bounded set $U_\alpha\subset \R^2$ such that for every $(t,z)\in[0,T]\times \clos {U_\alpha}$ the evolution map $\phi(t,z)$ associated to \eqref{eq:Fsystem} is well-defined, $\Null\subset U_\alpha$ and $\rot(F_T,\partial U_\alpha)=\{\alpha\}$.
\end{prop}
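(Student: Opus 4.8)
The plan is to carry out, in the present setting, the construction underlying \cite[eq.~(4.19)]{GidODE}, adapting the argument used in the proof of Corollary \ref{cor:super_base}. For each sufficiently large $\alpha$ the set $U_\alpha$ will be obtained by gluing a fixed neighbourhood of $\Null$ to a sublevel set of the rotation function $\rot(F_T,\cdot)$, both taken inside the open set $\mathcal D\subset\R^2$ of initial data whose solution is continuable on the whole interval $[0,T]$. The essential difficulty, absent in Corollary \ref{cor:super_base} where global existence was assumed, is that $F_T$ --- and hence $\rot(F_T,\cdot)$ --- is now defined only on $\mathcal D\setminus\Null$; we must therefore ensure both that $\Null$ sits well inside $\mathcal D$ and that the level set $\{\rot(F_T,\cdot)=\alpha\}$ is reached strictly before the blow-up boundary $\partial\mathcal D$.

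First I would analyse the behaviour near $\Null$. Since $h$ is continuously differentiable in $z$, solutions are unique forward and backward in time, so the trajectory hitting the origin at a given time $\bar t\in[0,T]$ is unique and, by hypothesis, exists on all of $[0,T]$; denoting by $z_0(\bar t)$ its value at $t=0$, continuous dependence (\cite[Ch.~1, Th.~3.4]{Hale}) makes $\bar t\mapsto z_0(\bar t)$ continuous, whence $\Null=\{z_0(\bar t):\bar t\in[0,T]\}$ is compact and connected. The same theorem shows that $\mathcal D$ is open, contains $\Null$, and that $\phi$, $F_T$ and $\rot(F_T,\cdot)$ are continuous on $\mathcal D\setminus\Null$. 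I then fix once and for all an open bounded neighbourhood $U_\Null$ of $\Null$ with $\clos{U_\Null}\subset\mathcal D$, and set $\bar\alpha:=\max_{z\in\partial U_\Null}\rot(F_T,z)$, which is finite because $\partial U_\Null$ is compact and disjoint from $\Null$.

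Next I would extract the two quantitative consequences of superlinearity. The rotation-speed estimate (\cite[Lemma 9]{GidODE}) provides a radius $R_*$ and a positive lower bound on the clockwise angular speed of any trajectory while it lies outside $B_{R_*}$; combined with the elastic property (\cite[Lemma 10]{GidODE}, \cite[Prop.~3]{CaMaZa}) this yields two facts. On one hand, for every $\alpha$ there is $R_\alpha>0$ such that $\rot(F_T,z)>\alpha$ whenever $z\in\mathcal D$ and $\abs{z}>R_\alpha$, since a trajectory starting far out accumulates more than $\alpha$ turns before time $T$; consequently every sublevel set $\{z\in\mathcal D\setminus\Null:\rot(F_T,z)<\alpha\}$ is bounded. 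On the other hand --- the delicate point --- $\rot(F_T,z)\to+\infty$ as $z$ approaches $\partial\mathcal D$: a non-continuable solution reaches arbitrarily large amplitude in finite time, and because the time needed to complete one turn shrinks to zero as the amplitude grows, it performs unboundedly many rotations before blowing up. Hence $\{\rot(F_T,\cdot)<\alpha\}$ stays at a positive distance from $\partial\mathcal D$, so that its closure is a compact subset of $\mathcal D$.

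With these ingredients I would set, for $\alpha>\bar\alpha$,
\[
U_\alpha:=U_\Null\cup\{z\in\mathcal D\setminus\Null:\rot(F_T,z)<\alpha\},
\]
and verify the required properties. The set is open, as a union of open sets, and bounded by the first estimate above. Since $\clos{U_\Null}\subset\mathcal D$ and the closure of the sublevel part is a compact subset of $\mathcal D$, we get $\clos{U_\alpha}\subset\mathcal D$, so $\phi(t,z)$ is well-defined for every $(t,z)\in[0,T]\times\clos{U_\alpha}$; moreover $\Null\subset U_\Null\subset U_\alpha$. Finally, for $\alpha>\bar\alpha$ every point of $\partial U_\Null$ satisfies $\rot(F_T,\cdot)\le\bar\alpha<\alpha$ and hence lies in the interior of $U_\alpha$, so $\clos{U_\Null}\subset\inter U_\alpha$ and $\partial U_\alpha\subset\mathcal D\setminus\Null$; there $\rot(F_T,\cdot)$ is continuous, a boundary point is a limit of points with $\rot<\alpha$ yet cannot itself satisfy $\rot<\alpha$ (else it would be interior), giving $\rot(F_T,\partial U_\alpha)=\{\alpha\}$. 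The hard part of the whole argument is the analytic core of the third paragraph: quantifying the \emph{infinitely many rotations before blow-up} phenomenon precisely enough to conclude that $\rot(F_T,\cdot)$ diverges at $\partial\mathcal D$, which is exactly what the estimates imported from \cite{GidODE} are designed to deliver.
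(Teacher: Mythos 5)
The paper does not actually prove Proposition~\ref{prop:gidU}: it is imported wholesale from \cite[eq.~(4.19) and Theorem~4]{GidODE}, so there is no in-paper argument to compare yours against. That said, your reconstruction is sound and matches the intended strategy, namely the construction of $U_\alpha$ from the proof of Corollary~\ref{cor:super_base} (a neighbourhood of $\Null$ glued to a sublevel set of the rotation) carried out inside the open set $\mathcal D$ of initial data continuable on $[0,T]$. Your verification of $\rot(F_T,\partial U_\alpha)=\{\alpha\}$ is correct, including the point that $\alpha>\bar\alpha$ forces $\clos{U_\Null}\subset U_\alpha$ so that the boundary lies entirely in the region where $F_T$ is defined and continuous. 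You also correctly isolate the two analytic inputs that make the construction work without global existence: boundedness of the sublevel set (elastic property plus angular-speed estimate applied to continuable solutions starting far out) and, the genuinely delicate one, the divergence of $\rot(F_T,\cdot)$ at $\partial\mathcal D$, obtained because a solution shadowing a blow-up solution reaches arbitrarily large amplitude before time $T$, hence by the elastic property has large minimal norm on all of $[0,T]$, hence rotates many times. Both are exactly the estimates of \cite[Lemmata~9 and~10]{GidODE}, so your deferral to them is legitimate and mirrors what the paper itself does; just make sure, if you write this out in full, that the elastic property is invoked only for solutions already known to be continuable (as you implicitly do), since that is the form in which it is available here.
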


The existence of a $T$-periodic solution follows straightforwardly via the Poincaré--Bohl Theorem or, similarly, via Theorem \ref{th:deg1}  (cf.~\cite[Theorem~4]{GidODE}).

\begin{theorem}\label{th:GidODE}
Let us assume that, for every $\bar t\in[0,T]$, the solutions of \eqref{eq:Fode} corresponding to the Cauchy problem $u(\bar t)=\dot u(\bar t)=0$ are continuable on $[0,T]$. Then \eqref{eq:Fode} admits at least one $T$-periodic solution.
\end{theorem}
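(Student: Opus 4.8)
The plan is to deduce Theorem~\ref{th:GidODE} as an application of Corollary~\ref{cor:super_base}, using Proposition~\ref{prop:gidU} to construct the open set on which the rotational hypothesis of the Corollary holds. The obstacle that the Corollary cannot be invoked verbatim is that Corollary~\ref{cor:super_base} presupposes global existence and uniqueness of solutions on all of $\R^2$, whereas under the superlinearity assumption solutions of \eqref{eq:Fode} may blow up in finite time; Proposition~\ref{prop:gidU} is precisely the device that circumvents this, producing for each large $\alpha$ an open bounded set $U_\alpha$ on which the evolution map $\phi(t,z)$ is well-defined, with $\Null\subset U_\alpha$ and $\rot(F_T,\partial U_\alpha)=\{\alpha\}$. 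So the first step is to invoke Proposition~\ref{prop:gidU}, whose continuability hypothesis matches exactly the hypothesis of the theorem on solutions through the origin of the phase plane.

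Once $U_\alpha$ is in hand, the second step is to replay the argument in the proof of Corollary~\ref{cor:super_base} on this set rather than on all of $\R^2$. Concretely, I would fix $\alpha>\bar\alpha$ with $\alpha\notin\Z$, so that $\rot(F_T,\partial U_\alpha)=\{\alpha\}$ gives $\conv\rot(F_T,\partial U_\alpha)=\{\alpha\}$ and hence $\Sigma(F_T,\partial U_\alpha)=\Z\cap\{\alpha\}=\emptyset$. Since $0_{\R^2}\in\Null\subset U_\alpha$, we have $0_{\R^2}\in U_\alpha$, and since $\Null\subset U_\alpha$ the condition $\Null\cap\partial U_\alpha=\emptyset$ of Section~\ref{sec:flow} is met (equivalently $0_{\R^2}\notin\phi([0,T]\times\partial U_\alpha)$), so that the lift $\Phi_T$ and the maps $f_T,F_T$ are defined on $\clos{U_\alpha}$ and $\partial\Us_\alpha$ respectively.

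With these verified, the third step is a direct application of Theorem~\ref{th:deg1}: because $\Sigma(F_T,\partial U_\alpha)=\emptyset$, we conclude $\deg(f_T,U_\alpha,0)=1\neq0$, and therefore $\phi_T$ has a fixed point $\bar z(0)\in U_\alpha$. Translating back to the scalar equation, this fixed point of the Poincaré map is precisely the initial datum of a $T$-periodic solution $\bar u$ of \eqref{eq:Fode}, which is the desired conclusion.

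I do not expect genuine difficulty in this argument, since the heavy lifting is delegated to the two prior results: the analytic estimates on the velocity of rotation and the elastic/continuability machinery are entirely absorbed into Proposition~\ref{prop:gidU}, while the topological content is absorbed into Theorem~\ref{th:deg1}. The only point requiring a touch of care is the choice $\alpha\notin\Z$ guaranteeing $\Sigma(F_T,\partial U_\alpha)=\emptyset$ rather than a singleton; this is exactly the same subtlety handled in the proof of Corollary~\ref{cor:super_base}, where $\alpha$ is chosen in $\R\setminus\Z$. The main conceptual step, in other words, is simply recognizing that Proposition~\ref{prop:gidU} delivers the open set with the rotational property that Corollary~\ref{cor:super_base} would otherwise have produced under global existence, so that the fixed-point conclusion follows by the identical degree computation.
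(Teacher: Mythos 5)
Your proposal is correct and follows essentially the same route as the paper, which states Theorem~\ref{th:GidODE} precisely as the result of combining Proposition~\ref{prop:gidU} with the argument of Corollary~\ref{cor:super_base} (i.e., choosing $\alpha\notin\Z$ so that $\Sigma(F_T,\partial U_\alpha)=\emptyset$ and invoking Theorem~\ref{th:deg1}). Your additional care in checking that $\Null\subset U_\alpha$ guarantees the lift and the maps $f_T,F_T$ are well-defined on $\clos{U_\alpha}$ is exactly the right verification and matches the paper's intent.
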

\begin{proof}
	By Proposition \ref{prop:gidU} and Theorem \ref{th:deg1}, taking $\alpha\notin \Z$, $\alpha>\bar \alpha$, we deduce that  $\deg(f_T,U_\alpha,0)=1$, hence $\phi_T$ has a fixed point $\bar z(0)$ in $U_\alpha$, which corresponds to a $T$-periodic solution $\bar z$ of the planar system. Moreover, if the fixed point is unique, and thus isolated, by the additivity of the degree we must have $\ind(\phi_T,\bar z(0))=1$.
\end{proof}

We point out that  the continuability requirement can be weakened, obtaining a necessary and sufficient condition for the existence of a $T$-periodic solution \cite[Corollary~5]{GidODE}.

Once a first periodic solution is found, we might look for additional ones. First of all, we notice that, up to a suitable $T$-periodic change of variable, the first periodic solution of $\eqref{eq:Fode}$ can be transformed into $x\equiv 0$, preserving the superlinearity property. Hence, for the remaining of this section, let us assume that 
\begin{equation}
	g(\cdot,0)+p(\cdot,0,0)=0 \,.
\end{equation}
We observe that, in such a case, the continuability assumption in Proposition~\ref{prop:gidU} is automatically satisfied. We define
\begin{equation}
	L(t)=\begin{pmatrix}
		0 & 1\\ -g_u(t,0)-p_u(t,0,0) & -p_w(t,0,0)
	\end{pmatrix}
\end{equation}
where $g_u,p_u,p_w$ are the partial derivatives of $g$ and $p$, so that the system $\dot z=L(t)z$ is the linearization of \eqref{eq:Fsystem} in the origin. We denote with $\phi^{L}$ the evolution map of the linear systems $\dot z=L(t)z$ and set $f_T^{L},F_T^{L}$ accordingly, as in Section \ref{sec:flow}. We make the following non-resonance assumption
\begin{equation}\label{eq:nonres}
f^{L}(T,z)=0 \qquad\text{if and only if} \qquad z=0
\end{equation}
namely, that $z\equiv 0$ is the unique $T$-periodic solution of $\dot z=L(t)z$, so that $\deg(f^{L}_T,B_1,0)$ is well-defined. 

We recall that, given a linear system $\dot z=L(t)z$, we have, for every $r>0$, $\deg(f^{L}_T,B_1,0)=\deg(f^{L}_T,B_r,0)$ and $\rot(F_T^{L}, \partial B_1)=\rot(F_T^{L}, \partial B_r)$. 

We are now ready to apply Theorem~\ref{th:main} to the superlinear case \eqref{eq:Fode}.

\begin{theorem} \label{th:super2}
	Let us assume \eqref{eq:nonres} and $\deg(f^{L}_T,B_1,0)\neq 1$. Then \eqref{eq:Fode} admits a nontrivial $T$-periodic solution $\tilde u$ (in addition to the trivial one $u\equiv 0$). Furthermore, such solution has the same rotational properties of the linearization around the origin: more precisely $\rot (F_T,(\tilde u(0),\dot{\tilde u}(0)))\in\Sigma(F_T^{L}, \partial B_1)$.
\end{theorem}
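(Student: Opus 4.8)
The plan is to realize the conclusion as an application of the twist Theorem~\ref{th:main} on a generalized annulus whose inner boundary is a small circle around the origin and whose outer boundary is furnished by the superlinear rotation estimate. Since we have normalized the first periodic solution to $u\equiv 0$ and assumed $g(\cdot,0)+p(\cdot,0,0)=0$, the continuability hypothesis of Proposition~\ref{prop:gidU} is automatically satisfied, so the outer set produced there is at our disposal; moreover any trajectory of \eqref{eq:Fsystem} reaching the origin of the phase plane must, by uniqueness, coincide with the trivial solution, whence $\Null=\{0_{\R^2}\}$.

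For the inner boundary I would exploit the nonresonance assumption \eqref{eq:nonres}, which makes the linearization $\dot z=L(t)z$ at the origin $T$-nonresonant and thus allows Lemma~\ref{lem:asymp_zero} to apply. Choosing $\epsilon$ small and $\inn r>0$ correspondingly small, that lemma gives
\[
	\deg(f_T,B_{\inn r},0)=\deg(f^L_T,B_{\inn r},0)=\deg(f^L_T,B_1,0)\neq 1,
	\qquad \Sigma(F_T,\partial B_{\inn r})=\Sigma(F^L_T,\partial B_1).
\]
Thus the degree of $f_T$ on the inner ball is different from one, and the admissible integer rotations on its boundary are exactly those of $\Sigma(F^L_T,\partial B_1)$.

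For the outer boundary I would apply Proposition~\ref{prop:gidU} with a non-integer value $\alpha>\bar\alpha$, obtaining an open bounded set $U_\alpha$ on which $\phi$ is well-defined, with $\Null\subset U_\alpha$ and $\rot(F_T,\partial U_\alpha)=\{\alpha\}$. As $\alpha\notin\Z$, we get $\Sigma(F_T,\partial U_\alpha)=\emptyset$. Shrinking $\inn r$ further if needed so that $\clos{B_{\inn r}}\subset U_\alpha$, I set $\inn U:=B_{\inn r}$, $\out U:=U_\alpha$ and $A:=\clos{\out U}\setminus \inn U$; since $\Null=\{0_{\R^2}\}\subset \inn U$, the admissibility requirement $\Null\cap\partial A=\emptyset$ holds, and the twist condition \eqref{eq:twist} is trivial because $\Sigma(F_T,\partial\out U)=\emptyset$.

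Now the first alternative of Theorem~\ref{th:main}, available because $\deg(f_T,\inn U,0)=\deg(f^L_T,B_1,0)\neq 1$, yields a fixed point $x\in A$ of $\phi_T$ with $F_T(x)=\nu_i$ for some $i\in\Sigma(F_T,\partial\inn U)=\Sigma(F^L_T,\partial B_1)$. Writing $x=(\tilde u(0),\dot{\tilde u}(0))$ and reading it back through the phase plane \eqref{eq:Fsystem} produces a $T$-periodic solution $\tilde u$ of \eqref{eq:Fode}, nontrivial since $x$ lies in the annulus away from the origin; moreover $\rot(F_T,x)=\tfrac{1}{2\pi}F_T^\theta(x)=i\in\Sigma(F^L_T,\partial B_1)$, which is the claimed rotational property. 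The one genuinely delicate point is the failure of global existence for superlinear equations: the evolution map need not be defined on all of $\R^2$. It is precisely Proposition~\ref{prop:gidU} that circumvents this, supplying an outer domain $U_\alpha$ on which $\phi$ is well-defined while still realizing the arbitrarily large rotation $\alpha$; once that domain is secured, the remainder is a clean bookkeeping of Theorem~\ref{th:main}.
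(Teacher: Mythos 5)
Your proposal is correct and follows essentially the same route as the paper: a small inner ball controlled by Lemma~\ref{lem:asymp_zero} together with the nonresonance assumption, an outer set $U_\alpha$ from Proposition~\ref{prop:gidU} with non-integer $\alpha$ so that $\Sigma(F_T,\partial U_\alpha)=\emptyset$, and then the first alternative of Theorem~\ref{th:main} on the resulting annulus. If anything, your write-up is more complete than the paper's own (rather terse) proof, in particular in spelling out why the twist condition \eqref{eq:twist} holds trivially and why the fixed point obtained is nontrivial.
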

\begin{proof}
Let us take $U_\alpha$ as provided by Proposition~\ref{prop:gidU}, with $\alpha>\bar \alpha$ and $\alpha\notin \Z$. Since, for our dynamics, $\Null=\{0_{\R^2}\}$, there exists $\hat r>0$ such that $\clos{B_{\hat r}}\subset U_\alpha$.

By Lemma~\ref{lem:asymp_zero} there exists $r_0>0$ sufficiently small such that $r_0<\hat r$ and
\begin{gather*}
	\deg(f_T,B_{r_0},0)=\deg(f^{L}_T,B_{r_0},0)=\deg(f^{L}_T,B_1,0)\neq 1\\
	\Sigma(F_T, \partial B_{r_0})=\Sigma(F_T^{L}, \partial B_{r_0})=\Sigma(F_T^{L}, \partial B_1) \,.
\end{gather*}
On the other hand, we know by Theorem~\ref{th:deg1} that $\deg (F_T,U_\alpha,0)$. 
The result follows from Theorem~\ref{th:main} applied to the annulus $\clos{U_\alpha}\setminus B_{r_0}$.
\end{proof}

We now consider the case $p\equiv 0$, so that \eqref{eq:Fsystem} acquires a Hamiltonian structure and, given \eqref{eq:nonres}, we can define the $T$-Maslov's index $i_0$ associated with the linear system $\dot z=L(t)z$ (i.e.~with $\dot z=J\widetilde L(t)z$ for $\widetilde L(t)=-JL(t)$). Notice that, since solution can make more than half rotation only clockwise, the index $i_0$ will be non-positive. We denote with $\lfloor q\rfloor$ the integral part of a real number $q$.

\begin{corol}\label{cor:super_inf}
Let us assume $p\equiv 0$ and \eqref{eq:nonres}, denoting with $i_0$ the $T$-Maslov associated to  $\dot z=L(t)z$. We set $n_0:=\lfloor -i_0/2\rfloor$. Then
\begin{itemize}
	\item for every integer $K\geq n_0+1$, there exist two distinct $T$-periodic solutions of \eqref{eq:Fode}, each having exactly $2K$ zeros in $[0,T)$;
	\item if $i_0=2n_0$ is even,  there exists a $T$-periodic solution of \eqref{eq:Fode} having exactly $2n_0$ zeros in $[0,T)$.
\end{itemize}
\end{corol}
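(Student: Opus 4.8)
The plan is to work in the phase plane of \eqref{eq:Fsystem} with $p\equiv0$, where the system is Hamiltonian with $H(t,u,w)=\tfrac12 w^2+\int_0^u g(t,s)\dd s$, and to reproduce, with the outer ball replaced by a confinement region, the two mechanisms already used in the proof of Theorem \ref{th:gidmar}: the Poincaré--Birkhoff Theorem \ref{th:PB} for the intermediate rotation numbers and the twist Theorem \ref{th:main} for the boundary value $n_0$. Since the origin is an equilibrium and \eqref{eq:nonres} forces uniqueness, one has $\Null=\{0_{\R^2}\}$, so the continuability hypothesis of Proposition \ref{prop:gidU} holds automatically; this yields, for every $\alpha>\bar\alpha$ with $\alpha\notin\Z$, an open bounded set $U_\alpha$ on which $\phi$ is defined and $\rot(F_T,\partial U_\alpha)=\{\alpha\}$. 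It is this $U_\alpha$, rather than a large ball, that will serve as the outer region, thereby bypassing the possible blow-up of solutions at infinity.

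Next I would fix the local picture at the origin. Applying Lemma \ref{lem:asymp_zero} to the linearization $\dot z=L(t)z$, and then Lemma \ref{lem:degrot_linear} to $\dot z=J\widetilde L(t)z$ with $\widetilde L=-JL$ and Maslov index $i_0$, one obtains a radius $r_0>0$ with $\deg(f_T,B_{r_0},0)=\deg(f_T^L,B_1,0)$ and $\Sigma(F_T,\partial B_{r_0})=\Sigma(F_T^L,\partial B_1)$, where $\rot(F_T,\partial B_{r_0})\subset(n_0-\tfrac12,n_0+\tfrac12)$ and $\deg(f_T,B_{r_0},0)=-1$ when $i_0$ is even, while $\rot(F_T,\partial B_{r_0})\subset(n_0,n_0+1)$ and $\deg(f_T,B_{r_0},0)=1$ when $i_0$ is odd (here $n_0=\lfloor -i_0/2\rfloor$). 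I would also record the dictionary between rotation and nodal count: since $\Null=\{0_{\R^2}\}$, on the axis $u=0$ we have $\dot u=w\neq0$, so each crossing of the $w$-axis is transversal and clockwise, whence a $T$-periodic solution with integer rotation number $K$ has exactly $2K$ zeros of $u$ in $[0,T)$.

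For the first assertion I would fix an integer $K\ge n_0+1$, pick $\alpha>\max\{\bar\alpha,K\}$ with $\alpha\notin\Z$, and apply the Poincaré--Birkhoff Theorem on the annulus $\clos{U_\alpha}\setminus\clos{B_{r_0}}$. The required twist $\rot(F_T,\partial B_{r_0})<K<\rot(F_T,\partial U_\alpha)=\alpha$ holds because in either parity the rotation on $\partial B_{r_0}$ stays strictly below $n_0+1\le K$, while $\alpha>K$; this produces two $T$-periodic solutions of rotation exactly $K$, hence with exactly $2K$ zeros. Distinct values of $K$ give distinct nodal counts, so all these solutions are different. For the second assertion, assume $i_0$ even and set $\inn U=B_{r_0}$, $\out U=U_\alpha$: then $\Sigma(F_T,\partial B_{r_0})=\{n_0\}$ while $\Sigma(F_T,\partial U_\alpha)=\emptyset$, so \eqref{eq:twist} holds trivially, and $\deg(f_T,\inn U,0)=-1\neq1$. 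Theorem \ref{th:main} then yields a fixed point in the annulus with $F_T=\nu_{n_0}$, i.e.\ a $T$-periodic solution with exactly $2n_0$ zeros, distinct from the previous ones since $2n_0<2K$.

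The main obstacle I foresee is the use of Poincaré--Birkhoff on $U_\alpha$, which in general is not a ball: Theorem \ref{th:PB} is stated for balls, so one must instead invoke the general annular form of \cite{FonUre} and verify that $\partial U_\alpha=\{F_T^\theta=2\pi\alpha\}$ meets its structural hypotheses, the key being that $\phi$ is defined on all of $\clos{U_\alpha}$ precisely by Proposition \ref{prop:gidU}. The only other delicate point is the rotation-to-nodes correspondence, which, although standard, must be argued carefully so that the conclusion reads ``exactly $2K$'' rather than merely ``at least $2K$'' zeros.
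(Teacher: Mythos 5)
Your reduction of the nodal count to the rotation number, your treatment of the linearization at the origin via Lemmata \ref{lem:asymp_zero} and \ref{lem:degrot_linear}, and your proof of the second bullet (which is exactly a re-derivation of Theorem \ref{th:super2}: twist between $\Sigma(F_T,\partial B_{r_0})=\{n_0\}$ and $\Sigma(F_T,\partial U_\alpha)=\emptyset$, plus $\deg(f_T,B_{r_0},0)=-1\neq 1$, then Theorem \ref{th:main}) all match the paper, which simply cites Theorem \ref{th:super2} for that part.

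The first bullet is where your argument has a genuine gap, and it is exactly the obstacle you flag but do not close. You propose to apply the Poincar\'e--Birkhoff Theorem on the generalized annulus $\clos{U_\alpha}\setminus\clos{B_{r_0}}$, with $U_\alpha$ supplied by Proposition \ref{prop:gidU}. But Theorem \ref{th:PB} is stated only for annuli between concentric balls and presupposes the evolution map of \eqref{eq:hamsist} defined on the whole outer ball; neither holds here. The general versions in \cite{FonUre,FonUre2} require structural hypotheses on the boundary components (star-shapedness, or the specific geometry of the nonconvex domains treated there), and the Poincar\'e--Birkhoff conclusion is known to \emph{fail} for annuli with non-star-shaped boundaries, so one cannot invoke it for an arbitrary level set $\partial U_\alpha=\{F_T^\theta=2\pi\alpha\}$ without proving that this set has the required geometry --- which Proposition \ref{prop:gidU} does not provide. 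This is precisely why the paper does not run this argument: for the first bullet it instead appeals to the superlinear multiplicity results of Jacobowitz, Hartman and Fonda--Sfecci (\cite[Theorem 1]{FonSfe}), whose proofs contain the dedicated machinery (truncation/elastic estimates allowing a Poincar\'e--Birkhoff application despite possible blow-up), and only verifies that the threshold $K\geq n_0+1$ is sharp using $\rot(F_T,\partial B_{r_0})<n_0+1$ from Lemmata \ref{lem:asymp_zero} and \ref{lem:degrot_linear}. To make your route rigorous you would either have to establish star-shapedness of $U_\alpha$ (not available in general) or reproduce the truncation argument of \cite{FonSfe}; as written, the step ``apply Poincar\'e--Birkhoff on $\clos{U_\alpha}\setminus\clos{B_{r_0}}$'' does not go through.
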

\begin{proof} First of all, we notice that, for a $T$-periodic solution of \eqref{eq:Fode}, having exactly $2K$ zeros in $[0,T)$ is equivalent to making exactly $K$ rotations around the origin. Indeed, since the origin cannot be crossed, the $\dot x$-axis of the phase-plane can be crossed only clockwise, hence exactly twice for every rotation.
	
	The first point is based on the Poincaré--Birkhoff Theorem, as was first proved by Jacobowitz \cite{Jacob} and Hartman \cite{Hart}. For a more recent, but analogous, approach, we suggest \cite{FonSfe}, where such results is generalized to systems. In particular, our statement follows from the one-dimensional case in \cite[Theorem~1]{FonSfe}, noticing that, by Lemmata \ref{lem:asymp_zero} and \ref{lem:degrot_linear}, we have $\rot(F_T,\partial u_\alpha)<n_0+1$, hence the condition $K\geq n_0+1$ is optimal in \cite[Lemma~3]{FonSfe}.
	
	The second part is a straightforward consequence of Theorem~\ref{th:super2}, with the rotational and degree properties of the linearization in the origin following from Lemmata \ref{lem:asymp_zero} and \ref{lem:degrot_linear}.
\end{proof}

Corollary \ref{cor:super_inf} illustrates once again the complementarity of the Poincaré--Birkhoff Theorem and our twist Theorem~\ref{th:main}, which we have already noticed in Theorem \ref{th:gidmar} for asymptotically linear systems. 

\medskip
\textbf{Acknowledgements.} The Author was partially supported by the GA\v{C}R Junior Star Grant 21-09732M, while he was a researcher at the Institute of Information Theory and Automation of the Czech Academy of Sciences. The Author is a member of the National Group for Mathematical Physics (GNFM--INdAM).

%%%%%%%%%%%%%%%%%%%%%%%%%%%%%%%%%%%%%%%%%%%%%%%%%%%%%%%%%%%%%%%%%%%%%%%%%%%
\footnotesize

\end{document}